\documentclass[12pt,a4paper,twoside]{amsart}
\usepackage{amsfonts, amsthm, amsmath, amssymb}
\usepackage{hyperref}
\usepackage{mathrsfs,amsmath}
\hypersetup{colorlinks=false}

\usepackage[margin=1.25in]{geometry}

\usepackage{helvet}
\usepackage{hyperref}

\usepackage{amsthm}
\newtheorem{theorem}{Theorem}
\newtheorem{lemma}{Lemma}[section]
\newtheorem{remark}{Remark}

\newtheorem{corollary}{Corollary}

\begin{document}

\author{Aritra Ghosh, Sumit Kumar, Kummari Mallesham and Saurabh Kumar Singh}
\title{Bounds for del Pezzo surfaces of degree two}

\address{ Aritra Ghosh, Sumit Kumar, Kummari Mallesham \newline {\em Stat-Math Unit, Indian Statistical Institute, 203 B.T. Road, Kolkata 700108, India; \newline  Email:  aritrajp30@gmail.com, sumitve95@gamil.com, iitm.mallesham@gmail.com
} }

\address{ Saurabh Kumar Singh \newline {\em Indian Institute of Technology, Kanpur, India; \newline  Email: skumar.bhu12@gmail.com
} }
\maketitle

\begin{abstract}
In this article, we obtain an upper bound for the number of integral points on the del Pezzo surfaces of degree two. 
\end{abstract}

\tableofcontents

\section{\bf Introduction}
 
 Let $F(x_{1},x_{2},x_{3})$ be be an irreducible homogeneous polynomial of degree four with integer coefficients. Then the equation 
 $$y^{2} =F(x_{1},x_{2},x_{3}) $$
 defines a del Pezzo surface of degree two (see, \cite{Tim}). We are interested in the rational points on this surface. Let 
 $$N(B) = \sharp \left\lbrace (x_{1}, x_{2}, x_{3}) \in [-B, B]^{3} : F(x_{1}, x_{2}, x_{3}) = y^2 \quad \text{for some} \, y \in \mathbb{Z} \right\rbrace.$$
 One expects to show that
 $$N(B) \ll_{F, \epsilon} B^{2+\epsilon}.$$

In general, this bound is optimal. For example, if we take $F(x_{1},x_{2},x_{3})=x_{1}^4+ x_{2}^4-x_{3}^{4}$  one would get the bound $N(B) \gg B^{2}$. In \cite{broberg}, N. Broberg obtained the bound
 $$N(B) \ll_{F, \epsilon} B^{\frac{9}{4}+\epsilon}$$
 using the p-adic determinant method (introduced by Heath-Brown \cite{HB} for hypersurfaces and extended to arbitrary varieties by N. Broberg and P. Salberger in \cite{BP}).
 In \cite{munshi1}, R. Munshi obtained the bound
 $$N(B) \ll_{F} B^{\frac{9}{4}} \left(\log B\right)^{\frac{3}{4}}$$
 using different methods than that of N. Broberg. 
 
 In fact, R. Munshi \cite{munshi1} obtained bound for an equation of the following type 
 $$y^{d} = F(x_{1},\ldots,x_{n}), \quad d \geq 2$$ 
 with  $F$ being an irreducible homogeneous polynomial of degree $md$ with $m \geq 1$. To count integer solutions to above equations, he introduced the $d$-power sieve which is based on ideas of D. R. Heath-Brown \cite{heath}. Later, T. D Browning \cite{tim2} developed the polynomial sieve which extends both the square sieve of D. R. Heath-Brown and $d$-power sieve of R. Munshi.  In \cite{lilli},  D. R. Heath-Brown and L. Pierce  improved  results of R. Munshi \cite{munshi1} for $n \geq 8$.

 \begin{remark}
 	D. Bonolis and L. Pierce spotted a gap in R. Munshi's original arguments. Later following suggestions of R. Munshi (see Remark 1 of \cite{Bonolis}) D. Bonolis filled the gap in his paper \cite{Bonolis} getting a bound of the same strength.   
 \end{remark}

 As an application of the main result in \cite{per} of P. Salberger (where he uses global determinant method through which one can look at many congruences modulo many primes simultaneously), one can easily obtain the following bound
 $$N(B) \ll B^{\frac{3}{\sqrt{2}}+\epsilon}.$$ 
 Through email communication we learnt from P. Salberger that he also has an unpublished result which  improves the above exponent $3/\sqrt{2}$ to $36/17$.

 The aim of this article is to improve the exponent $36/17$ of P. Salberger  using the ideas  in \cite{munshi1} of R. Munshi.  
 
 \begin{theorem} \label{mainth}
 We have
 $$N(B) \ll_{F, \epsilon} N^{2+\frac{1}{10}+\epsilon}.$$
 \end{theorem}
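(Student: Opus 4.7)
The plan is to adapt Munshi's square-sieve argument \cite{munshi1} to the present setting and to refine the treatment of the post-Poisson exponential sums enough to push the exponent from $9/4$ down to $21/10$.

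First, I would replace the indicator of $[-B,B]^3$ by a smooth bump $W(x/B)$, with $W$ compactly supported and identically $1$ on a slightly smaller cube; the boundary correction is negligible. Then I invoke the Heath-Brown square sieve: fix a parameter $Q$ to be optimized later and a set $\mathcal P$ of primes in $[Q,2Q]$ of good reduction for $F$, with $|\mathcal P|\asymp Q/\log Q$. For $x$ with $F(x)\ne 0$ and $F(x)$ free of a repeated prime factor larger than $Q$ (the remaining cases handled trivially, contributing $\ll B^{2+\epsilon}$), one has
$$\mathbf 1_{F(x)=\square} \le |\mathcal P|^{-2}\Bigl(\sum_{p\in\mathcal P}\bigl(\tfrac{F(x)}{p}\bigr)\Bigr)^{\!2} + O(|\mathcal P|^{-1}).$$
Expanding the square isolates a diagonal contribution of size $B^3/Q$ from an off-diagonal bilinear form over pairs $p_1\ne p_2\in\mathcal P$ of smoothly weighted Jacobi-symbol sums $\sum_x W(x/B)\bigl(\tfrac{F(x)}{p_1p_2}\bigr)$.

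For the off-diagonal I apply Poisson summation in $x$ modulo $q = p_1p_2$, obtaining
$$\sum_x W(x/B)\bigl(\tfrac{F(x)}{q}\bigr) = \frac{B^3}{q^3}\sum_{c\in\mathbb Z^3}S(c;q)\,\widehat W\!\left(\frac{Bc}{q}\right),\qquad S(c;q)=\sum_{x \bmod q}\bigl(\tfrac{F(x)}{q}\bigr)e\bigl(\tfrac{c\cdot x}{q}\bigr),$$
where $S(c;q)=S(c;p_1)S(c;p_2)$ by the Chinese Remainder Theorem. Rapid decay of $\widehat W$ confines the effective $c$-range to $|c|\ll qB^\epsilon/B$. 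Deligne's theorem gives $|S(c;p)|\ll p^{3/2}$ for generic $c$ at primes of good reduction, while the $c=0$ term admits a sharper Weil-type bound $S(0;p)\ll p^{1+\epsilon}$ coming from the smooth projective quartic curve $F=0$ in $\mathbb P^2_{\mathbb F_p}$.

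The improvement beyond the straightforward application, which only yields $B^{9/4}$, comes from a sharper average over the non-zero frequencies. The aim is to show that the $c$'s for which $|S(c;p)|$ saturates the Deligne bound form a thin subvariety of $(\mathbb Z/p\mathbb Z)^3$ — essentially controlled by the dual variety of the quartic $F=0\subset\mathbb P^2$ — while away from this locus one has a strictly better estimate $|S(c;p)|\ll p^{3/2-\delta}$ for some fixed $\delta>0$. Combining this with the Fourier truncation $|c|\ll qB^\epsilon/B$ and summing over pairs $(p_1,p_2)$ extracts the required saving, and balancing the off-diagonal against the diagonal $B^3/Q$ at the choice $Q = B^{9/10}$ produces the claimed $N(B)\ll B^{21/10+\epsilon}$. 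The main obstacle will be the uniform geometric analysis of these twisted character sums across the varying moduli $p_1,p_2\in\mathcal P$: one must identify and count the bad frequencies $c$ in terms of the singularities of $F$ twisted by the linear form $c\cdot x$, apply refined Katz-type estimates for the $\ell$-adic sheaf attached to $\chi_p(F)\cdot e(c\cdot x/p)$, and dispose of the finitely many primes of bad reduction for $F$. Once these character-sum inputs are in place, the remaining optimization over $Q$ is routine.
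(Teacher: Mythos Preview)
Your sketch has a genuine gap at the key step. The proposed saving --- that for generic frequencies $c$ one has $|S(c;p)|\ll p^{3/2-\delta}$ with some fixed $\delta>0$, while only a thin set of $c$ saturates the Deligne bound --- cannot hold. By Parseval,
\[
\sum_{c\bmod p}|S(c;p)|^{2}\;=\;p^{3}\,\#\{x\bmod p:\,p\nmid F(x)\}\;=\;p^{6}+O(p^{5}),
\]
so the average of $|S(c;p)|^{2}$ over $c\in(\mathbb{Z}/p\mathbb{Z})^{3}$ is $\sim p^{3}$, and $|S(c;p)|$ is \emph{typically} of exact size $p^{3/2}$. No geometric stratification can manufacture a sub-Weil pointwise bound on a dense open set of $c$; such results only locate where square-root cancellation may \emph{fail}, not where it can be \emph{beaten}. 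With the honest Weil input your off-diagonal is $\asymp Q^{3}$, which balanced against $B^{3}/Q$ simply re-derives Munshi's $B^{9/4}$; at your choice $Q=B^{9/10}$ the off-diagonal is $B^{27/10}$, much too large.

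The paper's route is structurally different, and that difference is where the saving actually lives. Instead of sieving with single primes $p\sim Q$, one sieves with products $p_{1}p_{2}$ where $p_{1}\sim P_{1}$ and $p_{2}\sim P_{2}$ lie on two genuinely different scales (Pierce's composite-modulus variant of the square sieve). After the first Poisson modulo $qq'=(p_{1}p_{2})(p_{1}'p_{2}')$ the character sum factors; one then \emph{spends} the square-root bound $p^{3/2}$ only on the small-prime factors $p_{2},p_{2}'$, and applies a \emph{second} Poisson summation in the frequency variables $\mathbf{x}$, now with the reduced modulus $p_{1}p_{1}'$. This conductor-lowering step is the mechanism that replaces your hoped-for sub-Weil cancellation: the resulting dual character sum is estimated trivially, the gain having already been extracted by the second Poisson. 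Optimising at $P_{1}=B^{3/5}$, $P_{2}=B^{3/10}$ then yields $N(B)\ll B^{21/10+\epsilon}$.
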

Note that $y^{2} =F(x_{1},x_{2},x_{3})$ defines a variety $V$ in the weighted projective space $\mathbb{P}(2,1,1,1)$, where $y$ is given weight $2$ and each $x_i$ is given weight $1$. This variety can also be viewed as a cyclic $2$-sheeted cover of the projective plane $\mathbb{P}^2$, via the natural map
\begin{align*} 
	g:V &\rightarrow \mathbb{P}^2; \notag\\
	(y,x_1,x_2,x_3)&\mapsto (x_1,x_2,x_3).
\end{align*}
To the above map, we associate a counting function 
\begin{align*}
	N(g,B)=\sharp\left\lbrace P \in V(\mathbb{Q}): H(g(P)) \leq B \right\rbrace.
\end{align*}
Serre's conjecture, in this case, predicts that 
$$N(g,B) \ll B^{2} \ (\log B)^{\gamma},$$
for some $\gamma <1$.
As a corollary to Theorem \ref{mainth}, we also obtain the following bound
\begin{corollary}
	Let $g:V \rightarrow \mathbb{P}^2$ be a cover of degree $2$. Then we have
	$$N(g,B) \ll B^{2+\frac{1}{10}+\epsilon},$$
	where $V$  and $g$ are defined as above.
\end{corollary}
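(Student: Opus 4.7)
My plan is to reduce the corollary directly to Theorem \ref{mainth}, by observing that $N(g, B)$ and the integer counting function $N(B)$ encode essentially the same information, with the former parametrized by primitive $\mathbb{P}^2$-representatives and the latter by arbitrary integer tuples in a box. The second clearly dominates the first up to a harmless bounded factor, so the required bound will follow by invoking Theorem \ref{mainth}.

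First I would set up the correspondence between rational points on $V$ and integer tuples. A point $P \in V(\mathbb{Q}) \subset \mathbb{P}(2,1,1,1)(\mathbb{Q})$ admits an integer representation $(y; x_1, x_2, x_3)$ with $\gcd(x_1, x_2, x_3) = 1$, unique up to the sign involution inherent in the weighted projective structure; the equation $y^2 = F(x_1, x_2, x_3)$ forces $y \in \mathbb{Z}$ once the $x_i$ are integers, and in this normalization the height $H(g(P))$ in $\mathbb{P}^2$ equals $\max_i |x_i|$. Each class $(x_1 : x_2 : x_3) \in \mathbb{P}^2(\mathbb{Q})$ with $H \leq B$ corresponds to a pair $\pm(x_1, x_2, x_3)$ of coprime triples in $[-B, B]^3$, and lifts under $g$ to at most two preimages on $V$ via the choice of sign of $y$.

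Writing $M(B)$ for the number of primitive integer triples $(x_1, x_2, x_3) \in [-B, B]^3$ for which $F(x_1, x_2, x_3)$ is a perfect square, the bookkeeping above gives $N(g, B) \ll M(B)$. Every such primitive triple is in particular an integer triple in $[-B, B]^3$ with $F = y^2$ for some $y \in \mathbb{Z}$, so $M(B) \leq N(B)$, and Theorem \ref{mainth} yields the bound $N(g, B) \ll B^{2 + 1/10 + \epsilon}$.

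No substantive obstacle is expected: the entire deduction is a matter of bookkeeping. The only point requiring care is the dictionary between the weighted projective space $\mathbb{P}(2,1,1,1)$, its projection $\mathbb{P}^2$, and the integer coordinate representatives, so that the bounded constants arising from sign choices and preimage multiplicities are correctly absorbed into the implicit constant of the final bound.
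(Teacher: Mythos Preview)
Your proposal is correct and matches the paper's approach: the paper states the corollary as an immediate consequence of Theorem~\ref{mainth} without giving a separate proof, and your reduction via primitive integer representatives is precisely the bookkeeping that makes this immediate. There is nothing to add.
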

 
 We use a variant of square sieve to prove Theorem \ref{mainth}. Initially the square sieve was introduced by D. R. Heath-Brown in \cite{HB} with prime moduli. Later L. Pierce  developed a variant of square sieve  in \cite{valil} with  composite moduli. In fact we use a version of square sieve by L. Pierce with composite moduli.  These composite moduli play an important role in our argument (serve as conductor lowering after splitting out the moduli, see Lemma\ref{midlelemma} and Lemma \ref{afterpoisson}). In its proof  we will assume that $F(x_{1},x_{2},x_{3})$ is of the form 
 $$F(x_{1},x_{2},x_{3})= \mathop{\sum \sum \sum}_{\substack{0 \leq i, j, k \leq 4 \\ i+j+k=4}} \, c_{i,j,k} \, x_{1}^{i} x_{2}^{j} x_{3}^{k}, \quad c_{i,j,k} \in \mathbb{Z}$$
 with $c_{4,0,0} = c_{0,4,0}=c_{0,0,4}=0$. In the case when one of $c_{4,0,0}, \  c_{0,4,0} $ or $c_{0,0,4}$ is non zero, then one can easily get better bounds $N(B) \ll_{F,\epsilon} B^{2+\epsilon}$ by noting the fact that any integer  $\ell$ can be represented in the form $x^{2}+cy^{2}$ with $|x|,|y|\leq B$ in at most $B^{2+\epsilon}$ ways.

 \section{\bf An application of square sieve}
Let's recall that
$$N(B) = \sharp \left\lbrace (x_{1}, x_{2}, x_{3}) \in [-B, B]^{3} : F(x_{1}, x_{2}, x_{3}) = \square \right\rbrace.$$
Let $P_{1}$ and $ P_{2}$ be two large positive real parameters such that
$$P_{2} \geq C \log B \quad \, \, \text{and} \, \  \  \, 10 P_{2} \leq P_{1}$$
for some large positive constant $C$ which depends only on the form $F$. Indeed, we choose $P_{1}$ and $ P_{2}$ (see, Section \ref{choose}) as follows:

\begin{equation} \label{choicep1p2}
P_{1}=B^{\frac{3}{5}-\epsilon} \quad \, \text{and} \, \ \  \, P_{2}= P_{1}^{\frac{1}{2}+\epsilon}.
\end{equation}
 We first detect squares in $N(B)$ as follows:
\begin{align}\label{intro of jacobi}
	N(B) \ll \, \frac{1}{(P_{1}P_{2})^2} \mathop{\sum \sum \sum}_{(x_{1}, x_{2},x_{3}) \in [-B, B]^3} \Big\vert \mathop{\sum \sum}_{\substack{p_{1} \sim P_{1} \\ p_{2} \sim P_{2}}} \left(\frac{F(x_{1}, x_{2}, x_{3})}{p_{1}p_{2}}\right) \Big\vert^{2}
\end{align}
where $\left(\frac{n}{p_1p_2}\right)$ is the Jacobi symbol and $p \sim P$ means that $P \leq p  \leq 2P$. From now on we write $\chi_{m}(n) := (\frac{n}{m})$ for any non-zero integer $m$.

Let $W: \mathbb{R}^{3} \to \mathbb{R}$ be a non-negative compactly supported smooth function supported  in  $[-2, 2]^{3}$ and satisfying $W(x_{1}, x_{2}, x_{3}) =1$ whenever $(x_{1}, x_{2}, x_{3}) \in [-1,1]^3$. Moreover
$$ \frac{\partial^{j_{1}+j_{2}+j_{3}}}{\partial x_{1}^{j_{1}} \partial x_{2}^{j_{2}} \partial x_{3}^{j_{3}}} \, W\left(x_{1}, x_{2}, x_{3}\right) \ll_{j_{1}, j_{2}, j_{3}} 1.$$ 
We now smooth out the sum over $x_i$'s in \eqref{intro of jacobi} as follows:
$$N(B) \ll \, \frac{1}{(P_{1}P_{2})^2} \mathop{\sum \sum \sum}_{(x_{1}, x_{2},x_{3}) \in \mathbb{Z}^3} \, W\left(\frac{x_{1}}{B}, \frac{x_{2}}{B}, \frac{x_{3}}{B} \, \right)\Big\vert \mathop{\sum \sum}_{\substack{p_{1} \sim P_{1} \\ p_{2} \sim P_{2}}} \chi_{p_{1}p_{2}} \left(F(x_{1}, x_{2}, x_{3})\right) \Big\vert^{2}.$$
Opening the absolute value square and  letting $q=p_{1} p_{2}$,  $q^{\prime} = p_{1}^{\prime} p_{2}^{\prime}$  and $Q= P_{1}P_{2}$, we see that the right hand side of the above expression transforms into
$$\frac{1}{Q^{2}} \mathop{\sum \sum}_{q, q^{\prime} \sim^{\star} Q} \mathop{\sum \sum \sum}_{(x_{1},x_{2},x_{3})\in \mathbb{Z}^{3}} \chi_{qq^{\prime}} \left(F(x_{1},x_{2},x_{3})\right) \, W\left(\frac{x_{1}}{B}, \frac{x_{2}}{B}, \frac{x_{3}}{B} \, \right)$$
where $q \sim^{\star} Q$  mean that $ Q \leq q \leq 4 Q$. We estimate the above sum by considering two cases when $q =q^{\prime}$ and $q\neq q^{\prime}$. Indeed, we have
$$N(B) \ll \frac{B^{3}}{Q} \, + \, \mathcal{S}(Q,B),$$
where
\begin{equation} \label{mainsum}
\mathcal{S}(Q,B) = \frac{1}{Q^{2}} \mathop{\sum \sum}_{\substack{q, q^{\prime} \sim^{\star} Q \\ q \neq q^{\prime}}} \mathop{\sum \sum \sum}_{(x_{1},x_{2},x_{3})\in \mathbb{Z}^{3}} \chi_{qq^{\prime}} \left(F(x_{1},x_{2},x_{3})\right) \, W\left(\frac{x_{1}}{B}, \frac{x_{2}}{B}, \frac{x_{3}}{B} \, \right).
\end{equation}
\section{\bf An applications of the Poisson summation formula}
In this section, we will  analyze $\mathcal{S}(Q,B)$. 
We have the following lemma.
\begin{lemma}
Let $\mathcal{S}(Q,B)$ be as in \eqref{mainsum}. Then we have
\begin{align}\label{S after poisson}
	\mathcal{S}(Q,B) = \frac{B^{3}}{Q^{2}} \mathop{\sum \sum}_{\substack{q, q^{\prime} \sim^{\star} Q \\ q \neq q^{\prime}}} \frac{1}{(qq^{\prime})^3}\mathop{\sum \sum \sum}_{{\bf x}=(x_{1},x_{2},x_{3})\in \mathbb{Z}^{3}}\, \mathfrak{C} \left(qq^{\prime}, {\bf x}\right) \, \mathfrak{I}\left(qq^{\prime}, {\bf x}\right),
\end{align}
where
\begin{equation} \label{charsum1}
\mathfrak{C} \left(qq^{\prime}, {\bf x}\right)  = \mathop{\sum \sum \sum}_{\beta_1, \beta_2, \beta_3 (\bmod q q^\prime) }   \chi_{q q^\prime} \big(  F(\beta_1, \beta_2, \beta_3)\big) e \left(  \frac{x_1 \beta_1 +x_2 \beta_2 + x_3  \beta_3 }{q q^\prime}\right)
\end{equation}
and 
\begin{equation} \label{theintegral}
\mathfrak{I}\left(qq^{\prime}, {\bf x}\right) = \iiint_{\mathbb{R}^3} W\left(y_{1}, y_{2}, y_{3}\right) \, e \left( -  \frac{x_1 y_1  B + x_2 y_2 B  +  x_3 y_3  B }{q q^\prime}\right) d y_1 \ dy_2 \ dy_3,
\end{equation}
with $x_{i} \ll (Q^2/B )B^{\epsilon}$ for all $i=1,2,3$.
\end{lemma}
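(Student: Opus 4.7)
The plan is to open up the triple sum over $(x_1,x_2,x_3)\in\mathbb{Z}^3$ in \eqref{mainsum} using the periodicity of the Jacobi symbol, and then apply three-dimensional Poisson summation. Since $q,q'\sim Q$ are products of primes $p_i\sim P_i$ with $P_2\geq C\log B$, both $q$ and $q'$ are odd, so $\chi_{qq'}(F(x_1,x_2,x_3))$ is a well-defined function of $(x_1\bmod qq',\,x_2\bmod qq',\,x_3\bmod qq')$. Writing each $x_i=\beta_i+qq'\,n_i$ with $0\leq\beta_i<qq'$ and $n_i\in\mathbb{Z}$, the inner triple sum in \eqref{mainsum} factors as
$$\sum_{\beta_1,\beta_2,\beta_3\,(\bmod\,qq')}\chi_{qq'}\bigl(F(\beta_1,\beta_2,\beta_3)\bigr)\sum_{(n_1,n_2,n_3)\in\mathbb{Z}^3}W\!\left(\frac{\beta_1+qq'n_1}{B},\frac{\beta_2+qq'n_2}{B},\frac{\beta_3+qq'n_3}{B}\right).$$

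Next I would apply the Poisson summation formula in each of the three variables $n_i$. After the change of variables $y_i=(\beta_i+qq'n_i)/B$, the Poisson transform in a single variable takes the form
$$\sum_{n\in\mathbb{Z}}W\!\left(\tfrac{\beta+qq'n}{B}\right)=\frac{B}{qq'}\sum_{x\in\mathbb{Z}}e\!\left(\frac{x\beta}{qq'}\right)\int_{\mathbb{R}}W(y)\,e\!\left(-\frac{xyB}{qq'}\right)dy.$$
Carrying this out in all three coordinates, then executing the $\beta_i$-sum, produces exactly the character sum $\mathfrak{C}(qq',\mathbf{x})$ of \eqref{charsum1}, while the resulting triple integral is precisely the oscillatory integral $\mathfrak{I}(qq',\mathbf{x})$ of \eqref{theintegral}. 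Collecting the cube of the Jacobian factor $B/(qq')$ coming from the three Poisson transforms together with the prefactor $1/Q^2$ from \eqref{mainsum} gives identity \eqref{S after poisson}.

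For the truncation of the new variables $x_i$, I would integrate $\mathfrak{I}(qq',\mathbf{x})$ by parts repeatedly in each $y_i$. Since $W$ is smooth, compactly supported in $[-2,2]^3$, and has all partial derivatives of bounded size, $A$-fold integration by parts gives the standard bound
$$\mathfrak{I}(qq',\mathbf{x})\ll_A\prod_{i=1}^{3}\left(1+\frac{|x_i|B}{qq'}\right)^{-A}.$$
Since $qq'\leq 16Q^2$, choosing $A$ sufficiently large in terms of $\epsilon$ shows that any tuple with some $|x_i|>(Q^2/B)B^{\epsilon}$ contributes a quantity $\ll B^{-100}$ after summing trivially over $q,q'$ and the remaining $\mathbf{x}$-coordinates, and can therefore be dropped. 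This restricts the outer sum in \eqref{S after poisson} to $|x_i|\ll(Q^2/B)B^{\epsilon}$.

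The entire derivation is routine; there is no real analytic obstacle. The only point that requires brief comment is the hypothesis $\gcd(qq',2)=1$ underlying both the well-definedness of the Jacobi symbol and its coordinate-wise periodicity mod $qq'$, which is guaranteed by the choice $P_2\geq C\log B$ with $C$ sufficiently large so that every prime $p_2\sim P_2$ is odd.
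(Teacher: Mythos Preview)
Your proof is correct and follows essentially the same route as the paper: split into residue classes modulo $qq'$, apply Poisson summation in the three variables, rescale to extract $\mathfrak{C}$ and $\mathfrak{I}$, and truncate via repeated integration by parts. Your additional remarks on the oddness of $qq'$ and the explicit decay bound for $\mathfrak{I}$ are welcome elaborations but do not constitute a different argument.
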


\begin{proof}
Reducing each $x_i$ modulo $qq^{\prime}$, i.e., changing the variables $x_i$ to  $\beta_i+x_iqq^{\prime}$, we observe that $\mathcal{S}(Q,B)$ transforms into 
\begin{align*}
	\frac{1}{Q^{2}}& \mathop{\sum \sum}_{\substack{q, q^{\prime} \sim^{\star} Q \\ q \neq q^{\prime}}}\mathop{\sum \sum \sum}_{\beta_1, \beta_2, \beta_3 (\bmod q q^\prime)} \chi_{q q^\prime} \big(  F(\beta_1, \beta_2, \beta_3)\big) \\
	  &\times \mathop{\sum \sum \sum}_{(x_{1},x_{2},x_{3})\in \mathbb{Z}^{3}}  W\left(\frac{\beta_1+x_1qq^{\prime}}{B}, \frac{\beta_2+x_2qq^{\prime}}{B}, \frac{\beta_3+x_3qq^{\prime}}{B} \, \right).
\end{align*}
Here we used the fact that $F(x_{1},x_{2},x_{3}) \equiv   F(\beta_1, \beta_2, \beta_3)\mod qq^{\prime}$. Next, we apply the Poisson summation formula to the sum over $x_i$'s. Thus the sum over $x_i$ transforms into
\begin{align*}
	\mathop{\sum \sum \sum}_{(x_{1},x_{2},x_{3})\in \mathbb{Z}^{3}}\iiint_{\mathbb{R}^3} W\left(\frac{\beta_1+y_1qq^{\prime}}{B}, \frac{\beta_2+y_2qq^{\prime}}{B}, \frac{\beta_3+y_3qq^{\prime}}{B} \right) \\
	\times e\left(- x_1y_1-x_2y_2-x_3y_3\right)d y_1  dy_2 dy_3.
\end{align*}
Lastly, making a change of variable $\frac{\beta_i+y_iqq^{\prime}}{B} \rightarrow y_i$, we get the lemma. We get the range for $x_i$'s by using integration by parts on the integral $\mathfrak{I}\left(qq^{\prime}, {\bf x}\right)$. 
\end{proof}

\section{\bf Analysis of Character sums}
In this section, we will analyze $\mathfrak{C} \left(qq^{\prime}, {\bf x}\right)$ in \eqref{charsum1}. We start by recalling some results on bounds for these kind of character sums. 
\subsection{Multiplicative character sums}
In this subsection, we collect some results on bounds for multiplicative character sums which are due to N. Katz \cite{mixedcharsum}. Statement of the following lemma is a combination of the statements of Theorem 2.1 and Theorem 2.2 in \cite{mixedcharsum}.

\begin{lemma} \label{multibound}
Let $k$ be a finite  field of characteristic $p$ with cardinality $q$. Let $f \in k[t_{1},t_{2},\ldots, t_{n}]$ be a Deligne polynomial of degree $d \geq 1$ such that $f=0$ defines a smooth hypersurface  in $\mathbb{A}_{k}^{n}$. Let $\chi $ be a non-trivial multiplicative character on $k$. Then we have
$$\Big \vert \sum_{{\bf t} \in k^{n}} \chi \left(f({\bf t})\right)\Big \vert \leq (d-1)^{n} q^{n/2}.$$
\end{lemma}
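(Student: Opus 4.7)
The plan is to pass from the character sum to a statement about $\ell$-adic cohomology via the Grothendieck-Lefschetz trace formula, and then to invoke Deligne's purity theorem (Weil II). Fix a prime $\ell \neq p$, and let $\mathcal{L}_\chi$ denote the Kummer $\overline{\mathbb{Q}}_\ell$-sheaf on $\mathbb{G}_{m,k}$ attached to $\chi$. On the open set $U = \{f \neq 0\} \subset \mathbb{A}_k^n$, define $\mathcal{F} = f^*\mathcal{L}_\chi$, and let $j : U \hookrightarrow \mathbb{A}_k^n$ be the open immersion. With the convention $\chi(0)=0$, the trace formula reads
\[
	\sum_{\mathbf{t}\in k^n}\chi\bigl(f(\mathbf{t})\bigr)
	\;=\; \sum_{i=0}^{2n}(-1)^i\,\mathrm{Tr}\bigl(\mathrm{Frob}_q \,\big|\, H^i_c(\mathbb{A}^n_{\bar k},\, j_!\mathcal{F})\bigr).
\]

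The first step I would carry out is cohomological concentration, namely that only $H^n_c$ contributes. Since $\{f=0\}$ is a smooth hypersurface, $\mathcal{F}$ is lisse on the smooth affine variety $U$ of dimension $n$. Artin vanishing for constructible sheaves on smooth affine varieties then gives $H^i(U,\mathcal{F})=0$ for $i>n$, and Poincar\'e duality converts this into $H^i_c(U,\mathcal{F})=0$ for $i<n$. Because $j_!$ commutes with the compactly supported cohomology functor, the same vanishing holds for $j_!\mathcal{F}$ on $\mathbb{A}^n_{\bar k}$, so the alternating sum above collapses to a single term in degree $n$.

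The second step is the dimension bound $\dim H^n_c(\mathbb{A}^n_{\bar k},\, j_!\mathcal{F}) \leq (d-1)^n$. This is where the Deligne hypothesis on $f$ does its work: the condition that the top-degree form of $f$ defines a smooth projective hypersurface ensures tameness of $\mathcal{F}$ at the hyperplane at infinity and controls the geometry of the compactification of $\{f=0\}$. I would carry this out by an inductive slicing argument using the Grothendieck--Ogg--Shafarevich (Euler--Poincar\'e) formula: restricting $\mathcal{F}$ to a generic affine line gives a Kummer-type sheaf on $\mathbb{A}^1$ whose Euler characteristic is bounded by $d-1$ in absolute value, and iterating this slicing in each of the $n$ coordinate directions multiplies the contributions to yield the claimed $(d-1)^n$. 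Once this is in hand, Deligne's Weil II applies: since $\mathcal{L}_\chi$ is pure of weight $0$, the sheaf $j_!\mathcal{F}$ is mixed of weights $\leq 0$, so every Frobenius eigenvalue on $H^n_c$ has absolute value at most $q^{n/2}$. Combining the dimension bound with purity yields the estimate $|\sum_{\mathbf{t}}\chi(f(\mathbf{t}))| \leq (d-1)^n q^{n/2}$.

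The main obstacle is the dimension bound of the second step, since the vanishing and purity inputs are by now standard. Establishing $\dim H^n_c \leq (d-1)^n$ requires the full strength of the Deligne hypothesis to eliminate wild ramification at infinity, together with a careful bookkeeping of Swan conductors and drops in rank during the generic slicing. Without the smoothness of the leading form, either extra lower-degree cohomology appears from the boundary divisor, or the Euler characteristic on a generic line becomes larger than $d-1$; showing that Katz's hypothesis precisely rules out both pathologies is the technical heart of the argument, and in Katz's treatment this is handled by a detailed local analysis at the divisor at infinity in a suitable projective compactification.
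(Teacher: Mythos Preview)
The paper does not prove this lemma at all: it is stated as a quotation of Theorems~2.1 and~2.2 of Katz's paper \emph{Estimates for nonsingular multiplicative character sums}, with no argument supplied. Your proposal, by contrast, sketches the actual $\ell$-adic proof that underlies Katz's result, so in that sense you have gone well beyond what the paper does. The trace-formula setup, the use of the Kummer sheaf $\mathcal{L}_\chi$, the appeal to Weil~II for purity, and the identification of the dimension bound $\dim H^n_c \le (d-1)^n$ as the crux are all on target and match Katz's line of reasoning.

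One point deserves tightening. You obtain $H^i_c(U,\mathcal{F})=0$ for $i<n$ from Artin vanishing plus Poincar\'e duality, and then assert that ``the alternating sum collapses to a single term in degree $n$''. But Artin vanishing and duality alone do not kill $H^i_c$ for $i>n$; on a smooth affine $U$ of dimension $n$ these groups can be nonzero up to degree $2n$. The vanishing of $H^i_c$ for $i>n$ is \emph{also} a consequence of the Deligne hypothesis (smoothness of the leading form), via control of the compactification at infinity---the same mechanism you invoke in your second step for the dimension count. So the ingredients are present in your write-up, but the logical split into ``concentration is standard'' and ``dimension bound needs Deligne'' is slightly misleading: the Deligne condition is already needed for full concentration in middle degree, not just for the size of that middle group. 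Once that is clarified, your outline is a faithful summary of Katz's proof.
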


\subsection{The character sum $\mathfrak{C} \left(m, {\bf x}\right)$}
In this subsection,  we will analyse the following character sum 
\begin{equation} \label{maincharsum}
\mathfrak{C} \left(m, {\bf x}\right):  = \mathop{\sum \sum \sum}_{\beta_1, \beta_2, \beta_3 \bmod m }   \chi_{m} \big(  F(\beta_1, \beta_2, \beta_3)\big) e \left(  \frac{x_1 \beta_1 +x_2 \beta_2 + x_3  \beta_3 }{m}\right),
\end{equation}
where $ m \in \mathbb{N}$ and ${\bf x}=(x_{1},x_{2},x_{3}) \in \mathbb{Z}^{3}$. The above character sum satisfies  multiplicative property. More precisely, we have the following lemma. 
\begin{lemma}  \label{multiprop}
Let $\mathfrak{C} \left(m, {\bf x}\right)$ be as in \eqref{maincharsum}. Then $\mathfrak{C} \left(m, {\bf x}\right)$ is a multiplicative function, i.e., we have
$$\mathfrak{C}(mn, {\bf x}) = \mathfrak{C}(m, {\bf x}) \, \mathfrak{C}(n, {\bf x}),$$
whenever $(m, n)=1$.
\end{lemma}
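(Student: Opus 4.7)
The plan is to unwrap $\mathfrak{C}(mn,{\bf x})$ via the Chinese Remainder Theorem. Since $(m,n)=1$, each $\beta_i \bmod mn$ has a unique representation $\beta_i \equiv n\bar n\,\alpha_i + m\bar m\,\gamma_i \pmod{mn}$, where $\alpha_i \in \mathbb{Z}/m\mathbb{Z}$, $\gamma_i \in \mathbb{Z}/n\mathbb{Z}$, and $\bar n$ (resp.\ $\bar m$) denotes the inverse of $n$ modulo $m$ (resp.\ of $m$ modulo $n$). This bijection converts the triple sum over $(\beta_1,\beta_2,\beta_3) \bmod mn$ into independent sums over $(\alpha_1,\alpha_2,\alpha_3) \bmod m$ and $(\gamma_1,\gamma_2,\gamma_3) \bmod n$.

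Next I would check that each of the three ingredients of the summand decomposes along this product. The Jacobi symbol splits by multiplicativity in the modulus, $\chi_{mn}(N) = \chi_m(N)\chi_n(N)$, and since $\beta_i \equiv \alpha_i \pmod m$ and $\beta_i \equiv \gamma_i \pmod n$, this yields $\chi_{mn}(F(\beta_1,\beta_2,\beta_3)) = \chi_m(F(\alpha_1,\alpha_2,\alpha_3))\,\chi_n(F(\gamma_1,\gamma_2,\gamma_3))$. The additive character splits directly:
\[
e\!\left(\frac{x_i\beta_i}{mn}\right) = e\!\left(\frac{x_i\,\bar n\,\alpha_i}{m}\right)e\!\left(\frac{x_i\,\bar m\,\gamma_i}{n}\right).
\]

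To match $\mathfrak{C}(m,{\bf x})$ and $\mathfrak{C}(n,{\bf x})$ on the nose, I would then absorb the residues $\bar n,\bar m$ by the substitutions $\alpha_i \mapsto n\alpha_i'$ in $\mathbb{Z}/m\mathbb{Z}$ and $\gamma_i \mapsto m\gamma_i'$ in $\mathbb{Z}/n\mathbb{Z}$; these are bijections because $n$ is a unit mod $m$ and vice versa. After the substitution the phases become $e(x_i\alpha_i'/m)$ and $e(x_i\gamma_i'/n)$, while the Jacobi-symbol factor on the $m$-side becomes $\chi_m(F(n\alpha_1',n\alpha_2',n\alpha_3')) = \chi_m(n^4)\,\chi_m(F(\alpha_1',\alpha_2',\alpha_3'))$ by the degree-four homogeneity of $F$, and $\chi_m(n^4)=1$ since $n^4$ is a perfect square. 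The analogous identity holds modulo $n$, and the product of the two resulting sums is exactly $\mathfrak{C}(m,{\bf x})\,\mathfrak{C}(n,{\bf x})$. The argument is essentially a clean CRT decomposition and presents no substantive obstacle; the only detail worth flagging is that it is the evenness of $\deg F = 4$ together with the quadratic nature of the Jacobi symbol that renders the stray factor $\chi_m(n^{\deg F})$ trivial, allowing the CRT inverses to be reabsorbed in the phase without leaving extraneous character factors behind.
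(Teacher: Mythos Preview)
Your proof is correct and follows essentially the same Chinese Remainder decomposition as the paper; the only cosmetic difference is that the paper parametrizes directly by $\beta_i=\beta_i' n+\beta_i'' m$, so the additive character splits immediately and the factor $\chi_m(n^4)=1$ appears in the Jacobi symbol from the outset, whereas you first split the Jacobi symbol cleanly and then absorb $\bar n,\bar m$ by a substitution---the key ingredient (degree-four homogeneity together with $\chi_m(n^4)=1$) is identical.
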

\begin{proof}
	We have
		\begin{align*}
		\mathfrak{C} \left(mn, {\bf x}\right)  = \mathop{\sum \sum \sum}_{\beta_1, \beta_2, \beta_3 \bmod mn }   \chi_{mn} \big(  F(\beta_1, \beta_2, \beta_3)\big) e \left(  \frac{x_1 \beta_1 +x_2 \beta_2 + x_3  \beta_3 }{mn}\right).
	\end{align*}
	 We write $\beta_i$ as
	 $$\beta_i=\beta_i^{\prime}n+\beta_i^{\prime \prime}m,  \ \ \quad \beta_i^\prime \bmod m \ \ \text{and} \ \ \beta_i^{\prime\prime} \bmod n, \ \ i=1,2,3. $$ 
	 
We observe that
	 \begin{align*}
	 	\chi_{mn}(...)&=\chi_{m} \big(  F(\beta_1^\prime n, \beta_2^\prime n, \beta_3^\prime n) \big) \ \chi_{n} \big(  F(\beta_1^{\prime\prime} m, \beta_2^{\prime\prime} m, \beta_3^{\prime\prime} m)\big) \\
	 	&=\chi_{m} \big(  F(\beta_1^\prime, \beta_2^\prime , \beta_3^\prime ) \big) \ \chi_{n} \big(  F(\beta_1^{\prime\prime} , \beta_2^{\prime\prime} , \beta_3^{\prime\prime} )\big) 
	 \end{align*}
	 and 
	 $$e \left(...\right)=e \left(  \frac{x_1 \beta_1^{\prime} +x_2 \beta_2^{\prime} + x_3  \beta_3^{\prime} }{m}\right) \ e \left(  \frac{x_1 \beta_1^{\prime \prime} +x_2 \beta_2^{\prime \prime} + x_3  \beta_3^{\prime \prime} }{n}\right).$$
	 Hence the lemma follows.
	 	\end{proof}
 	Thus it is sufficient to study $\mathfrak{C}(m, {\bf x})$ whenever $m$ is a prime power. Infact, as we are ultimately interested in $\mathfrak{C}(qq^{\prime}, {\bf x})$, it is enough to study $\mathfrak{C}(p, {\bf x})$ for $p$ prime.  The following lemma provides required bounds for such sums.  

\begin{lemma} \label{c2bound}
Let $p$ a prime and ${\bf x} = (x_{1}, x_{2}, x_{3})$ be any triplet of integers. Then we have square root cancellations in $\mathfrak{C}(p, {\bf x})$, i.e.,

\begin{align*}
	\mathfrak{C}(p, {\bf x}) \ll p^{3/2}.
\end{align*}
\end{lemma}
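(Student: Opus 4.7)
The strategy is a case split based on whether $\mathbf{x}$ vanishes modulo $p$.

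\emph{Case 1: $p\mid x_i$ for every $i$.} The additive phase is identically $1$, and
$$\mathfrak{C}(p,\mathbf{x}) = \sum_{\boldsymbol{\beta} \bmod p} \chi_p\!\big(F(\boldsymbol{\beta})\big).$$
For every prime $p$ outside the finite set dividing the discriminant of $F$, the reduction of $F$ modulo $p$ defines a smooth plane quartic in $\mathbb{P}^2_{\mathbb{F}_p}$, so $F$ is a Deligne polynomial of degree $4$ on $\mathbb{A}^3_{\mathbb{F}_p}$. Lemma~\ref{multibound} with $n=3$, $d=4$ then gives $|\mathfrak{C}(p,\mathbf{x})| \leq 27\, p^{3/2}$, and the contribution from the finitely many excluded primes can be absorbed into the $F$-dependent implicit constant.

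\emph{Case 2: $p\nmid x_i$ for at least one $i$.} The linear form $\ell(\boldsymbol{\beta}) = \mathbf{x}\cdot\boldsymbol{\beta}$ now enters non-trivially, so what is needed is a mixed character-sum estimate. The natural input is the companion Theorem~2.2 in \cite{mixedcharsum}, applied with the Deligne polynomial $f = F$ of degree $4$ and the linear polynomial $g = \ell$: under the standard Deligne-type non-degeneracy condition on the pair $(F,\ell)$, the bound $\ll p^{3/2}$ follows. Combining the two cases yields the lemma.

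The main obstacle is Case~2, namely verifying the non-degeneracy hypotheses uniformly in $\mathbf{x}\bmod p$. The worry is that for a small set of exceptional directions $\mathbf{x}$ (those corresponding to, e.g.\ bitangent lines of the plane quartic $F=0$) the restriction of $F$ to the hyperplane $\ell=0$ degenerates to a perfect square, in which case a naive decomposition into projective classes $\boldsymbol{\beta} = t\boldsymbol{\beta}_0$ would only give a bound of size $p^2$. A clean way to bypass this difficulty, should quoting Katz directly be deemed insufficient, is to use $\chi_p(F(\boldsymbol{\beta})) = \#\{y\bmod p : y^2 \equiv F(\boldsymbol{\beta})\} - 1$ to rewrite
$$\mathfrak{C}(p,\mathbf{x}) = \sum_{(y,\boldsymbol{\beta})\in X(\mathbb{F}_p)} e\!\left(\frac{\mathbf{x}\cdot\boldsymbol{\beta}}{p}\right) \; - \; p^{3}\,\mathbf{1}_{\mathbf{x}\equiv \mathbf{0}},$$
where $X: y^2 = F(\boldsymbol{\beta}) \subset \mathbb{A}^4$ is an affine $3$-fold, and then apply Deligne's square-root estimate on $X$; for generic $F\bmod p$, $X$ has only an isolated conical singularity at the origin, which does not disturb the $p^{3/2}$ bound.
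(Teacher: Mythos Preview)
Your Case~1 coincides with the paper's. Case~2, however, is handled quite differently in the paper, and your treatment of it has a real gap.

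You correctly flag that Katz's Theorem~2.2 requires a non-degeneracy condition on the pair $(F,\ell)$ that you have not verified uniformly in $\mathbf{x}$, and indeed it can fail for special directions. Your proposed fallback --- passing to the affine threefold $X:y^{2}=F(\boldsymbol{\beta})$ and invoking ``Deligne's square-root estimate'' --- is not a one-line citation either: the relevant cohomological bounds for additive character sums on $X$ need either smoothness or a careful treatment of the cone point and of the behaviour at infinity in the weighted compactification, none of which you supply. So as written, Case~2 is an outline rather than a proof.

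The paper bypasses all of this by exploiting the \emph{homogeneity} of $F$ directly, together with the standing assumption (made in the introduction) that the pure coefficients $c_{4,0,0},c_{0,4,0},c_{0,0,4}$ vanish. Assuming, say, $p\nmid x_{2}$, one separates $\beta_{1}=0$ from $\beta_{1}\not\equiv 0$. In the latter range, the substitution $\beta_{j}\mapsto \beta_{1}\beta_{j}$ for $j=2,3$ uses $\chi_{p}(\beta_{1}^{4})=1$ to produce
\[
\sideset{}{^*}\sum_{\beta_{1}}\ \sum_{\beta_{2},\beta_{3}} \chi_{p}\big(F(1,\beta_{2},\beta_{3})\big)\, e\!\left(\frac{(x_{1}+x_{2}\beta_{2}+x_{3}\beta_{3})\beta_{1}}{p}\right),
\]
and the $\beta_{1}$-sum is a Ramanujan sum. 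This collapses the expression to a two-variable pure multiplicative character sum (bounded by $p$ via Lemma~\ref{multibound}) plus $p$ times a one-variable sum along the line $x_{1}+x_{2}\beta_{2}+x_{3}\beta_{3}\equiv 0$, which Weil (again Lemma~\ref{multibound}) bounds by $p^{1/2}$, giving $p^{3/2}$. The $\beta_{1}=0$ piece is handled the same way with $\beta_{2}$ playing the scaling role, using the vanishing of $c_{0,0,4}$ to discard $\beta_{2}\equiv 0$; it contributes only $O(p)$.

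The advantage of the paper's route is that it is entirely elementary and never needs uniformity in $\mathbf{x}$ of any Deligne-type non-degeneracy hypothesis: the only inputs are Lemma~\ref{multibound} in dimensions one and two, where the conditions are easy to check for all but finitely many $p$. Your cohomological approach could in principle be completed, but it would require citing a precise result for additive character sums on affine cones (or a stratification argument), which is considerably heavier machinery than what is actually needed here.
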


\begin{proof}
Consider the character sum 

\begin{align*}
	\mathfrak{C} \left(p, {\bf x}\right)  &= \mathop{\sum \sum \sum}_{\beta_1, \beta_2, \beta_3 \bmod p}   \chi_{p} \big(  F(\beta_1, \beta_2, \beta_3)\big) e \left(  \frac{x_1 \beta_1 +x_2 \beta_2 + x_3  \beta_3 }{p}\right).
	\end{align*}
First we consider the case when $p|x_i$ for all $i$. In this case the above sum looks like 
\begin{align*}
\mathfrak{C} \left(p, {\bf x}\right)  &= \mathop{\sum \sum \sum}_{\beta_1, \beta_2, \beta_3 \bmod p}   \chi_{p} \big(  F(\beta_1, \beta_2, \beta_3)\big).
\end{align*}
Now applying Lemma \ref{multibound}, we see that the above sum is bounded by $p^{3/2}$.
% Then the first term of the above expression  transforms into
%\begin{align*}
%	\mathop{ \sum \sum}_{\beta_2, \beta_3 \bmod p}\chi_{p} \big(  F(0, \beta_2, \beta_3)\big)=\mathop{\sideset{}{^\star}{\sum } {\sum}}_{\beta_2, \beta_3 \bmod p}\chi_{p} \big(  F(0, \beta_2, \beta_3)\big).
%\end{align*}
%Note that when $\beta_2 \equiv 0 \bmod p$ or $\beta_3 \equiv 0 \bmod p$, then $F(0,\beta_2,\beta_3) \equiv 0 \bmod p$. Now making the change of variable $\beta_3 \rightarrow \beta_3\beta_2$, we arrive at
%$$\mathop{\sideset{}{^\star}{\sum } {\sum}}_{\beta_2, \beta_3 \bmod p}\chi_{p} \big(  F(0, 1, \beta_3)\big)$$
%which, using Lemma \ref{multibound}, is bounded by $p^{3/2}$. Now consider the second sum in \eqref{C1+C2}
%\begin{align*}
%	\mathop{\sum \sum \sum}_{\substack{\beta_1, \beta_2, \beta_3 \bmod p \\ \beta_1 \not \equiv 0 \bmod  p}} \chi_{p} \big(  F(\beta_1, \beta_2, \beta_3)\big)
%\end{align*}
%By the change of variable $\beta_2\rightarrow \beta_2\beta_1$ and $\beta_3 \rightarrow \beta_3\beta_1$, we arrive at 
%\begin{align*}
%	\mathop{\sum \sum \sum}_{\substack{\beta_1, \beta_2, \beta_3 \bmod p \\ \beta_1 \not \equiv 0 \bmod  p}} \chi_{p} \big(  F(1, \beta_2, \beta_3)\big)
%\end{align*}
%Now we apply Lemma \ref{multibound} to the  sum over $\beta_2$ and $\beta_3$. Thus the above sum is bounded by $p^2$.
Now we consider the other case, i.e., $x_i\not \equiv 0  \bmod p$ for some $i$. Without loss of generality, let's assume  $x_2\not \equiv 0  \bmod p$. In this case, we split $\mathfrak{C} \left(p, {\bf x}\right)$  as follows 
\begin{align}\label{C1+C2}
		\mathfrak{C} \left(p, {\bf x}\right) 
		=&\mathop{ \sum \sum}_{ \beta_2, \beta_3 \bmod p} ...+\mathop{\sum \sum \sum}_{\substack{\beta_1, \beta_2, \beta_3 \bmod p \\ \beta_1 \not \equiv 0 \bmod  p}} ...
		\end{align}
We now  consider the first term of the above expression which is given by
\begin{align*}
	&\mathop{{\sum \sum}}_{\beta_2, \beta_3 \bmod p}\chi_{p} \big(  F(0, \beta_2, \beta_3)\big) e \left(  \frac{x_2 \beta_2 + x_3  \beta_3 }{p}\right) \\
=& \mathop{\sideset{}{^\star}{\sum } {\sum}}_{\beta_2, \beta_3 \bmod p}\chi_{p} \big(  F(0, 1, \beta_3)\big) e \left(  \frac{(x_2  + x_3  \beta_3)\beta_2 }{p}\right) \\
=&-\mathop{{\sum}}_{\beta_3 \bmod p} \chi_{p} \big(  F(0, 1, \beta_3)\big)+p\mathop{{\sum}}_{\substack{\beta_3 \bmod p \\ x_2+x_3\beta_3 \equiv 0 \bmod p}} \chi_{p} \big(  F(0, 1, \beta_3)\big).
\end{align*}
Note that when $p \mid \beta_2$, then $\chi_{p} \big(  F(0, \beta_2, \beta_3)\big)=0$ by the nature of the form $F$. The congruence condition determine $\beta_3$ uniquely. Hence the above expression is bounded by $p$. Now we consider the second sum of \eqref{C1+C2}
\begin{align*}
	&\mathop{\sum \sum \sum}_{\substack{\beta_1, \beta_2, \beta_3 \bmod p \\ \beta_1 \not \equiv 0 \bmod  p}}\chi_{p} \big(  F(\beta_1, \beta_2, \beta_3)\big) e \left(  \frac{x_1 \beta_1 +x_2 \beta_2 + x_3  \beta_3 }{p}\right) \\
	=&\mathop{\sum \sum \sum}_{\substack{\beta_1, \beta_2, \beta_3 \bmod p \\ \beta_1 \not \equiv 0 \bmod  p}}\chi_{p} \big(  F(1, \beta_2, \beta_3)\big) e \left(  \frac{(x_1 +x_2 \beta_2 + x_3  \beta_3)\beta_1 }{p}\right) \\
	=-&\mathop{ \sum \sum}_{\substack{\beta_2, \beta_3 \bmod p}}\chi_{p} \big(  F(1, \beta_2, \beta_3)\big)+p\mathop{\sum \sum}_{\substack{ \beta_2, \beta_3 \bmod p \\ x_1+x_2\beta_2+x_3\beta_3 \equiv 0 \bmod  p}}\chi_{p} \big(  F(1, \beta_2, \beta_3)\big). 
\end{align*}
We apply Lemma \ref{multibound} to the first term. Hence, it is bounded by $p$. In the second term, given $\beta_3$, the congruence condition determine $\beta_2$ uniquely in terms of $\beta_3$. Finally apply Lemma \ref{multibound} to sum over $\beta_3$, we see that the above expression is bound by $p^{3/2}$. Hence we have the lemma.
\end{proof}

\subsection{The character sum $\mathscr{C}(p, {\bf x})$}
In this subsection we will analyze  the character sum
\begin{equation} \label{finalchar}
\mathscr{C}(p, {\bf x}) := \mathop{\sum \sum \sum }_{\alpha_{1}, \alpha_{2}, \alpha_{3} \, \text{mod} \, p} \, \mathfrak{C}(p, {\bf \alpha}) \, e\left(\frac{\alpha_{1}x_{1}+\alpha_{2}x_{2}+\alpha_{3}x_{3}}{p}\right).
\end{equation}
where $\mathfrak{C}(p, {\bf \alpha})$ is as given in \eqref{maincharsum} with ${\bf \alpha}=(\alpha_{1},\alpha_{2},\alpha_{3})$ and ${\bf x}=(x_{1},x_{2},x_{3}) \in \mathbb{Z}^{3}$. We will encounter the above character sum later.
\begin{lemma} \label{finalcharsum}
We have
%\[
%\mathscr{C}(p, {\bf x}) \ll
%\begin{cases}
%p^{4} & \text{if} \quad \,  p \mid x_{i} \, \text{for all} \, \, i=1,2,3 \\
%p^{3} & \text{otherwise}.
%\end{cases}
%\]
\begin{align*}
	\mathscr{C}(p, {\bf x}) \ll p^3.
\end{align*}
\end{lemma}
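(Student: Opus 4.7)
The plan is to recognize $\mathscr{C}(p,\mathbf{x})$ as essentially the finite Fourier transform of $\mathfrak{C}(p,\boldsymbol\alpha)$ in the variable $\boldsymbol\alpha$, and then collapse it using orthogonality of additive characters. Concretely, I will substitute the definition \eqref{maincharsum} of $\mathfrak{C}(p,\boldsymbol\alpha)$ into \eqref{finalchar}, so that
\[
\mathscr{C}(p,\mathbf{x}) \;=\; \sum_{\boldsymbol\beta\,(\bmod\,p)} \chi_p\bigl(F(\beta_1,\beta_2,\beta_3)\bigr) \sum_{\boldsymbol\alpha\,(\bmod\,p)} e\!\left(\frac{\alpha_1(\beta_1+x_1)+\alpha_2(\beta_2+x_2)+\alpha_3(\beta_3+x_3)}{p}\right).
\]

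Next, I would switch the order of summation so that the sum over $\boldsymbol\alpha$ is innermost. By orthogonality of characters on $(\mathbb{Z}/p\mathbb{Z})^3$, the inner triple sum equals $p^3$ exactly when $\beta_i \equiv -x_i \pmod{p}$ for every $i\in\{1,2,3\}$, and vanishes otherwise. Therefore the outer sum over $\boldsymbol\beta$ collapses to the single term $\boldsymbol\beta\equiv -\mathbf{x}\pmod p$, yielding
\[
\mathscr{C}(p,\mathbf{x}) \;=\; p^3\,\chi_p\bigl(F(-x_1,-x_2,-x_3)\bigr).
\]
Since the Jacobi symbol is bounded in absolute value by $1$, this identity immediately gives the claimed bound $\mathscr{C}(p,\mathbf{x})\ll p^3$.

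There is essentially no obstacle here: the lemma is a direct consequence of the orthogonality relation, and the nontrivial input from Lemma~\ref{c2bound} is not needed at this stage. (In fact, the argument shows equality up to a character factor, so the bound is sharp whenever $p\nmid F(-\mathbf{x})$.) The only thing to be careful about is that the exponential $e(\cdot)$ inside $\mathfrak{C}(p,\boldsymbol\alpha)$ has the variable $\boldsymbol\alpha$ paired with $\boldsymbol\beta$, and the exponential appearing explicitly in \eqref{finalchar} pairs $\boldsymbol\alpha$ with $\mathbf{x}$, so after interchanging the two summations it is the sum over $\boldsymbol\alpha$ that factors and detects the congruence $\boldsymbol\beta\equiv -\mathbf{x}$.
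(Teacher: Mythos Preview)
Your proposal is correct and follows essentially the same route as the paper: substitute the definition of $\mathfrak{C}(p,\boldsymbol\alpha)$, interchange the sums, and use orthogonality of additive characters to collapse the $\boldsymbol\beta$-sum to a single term, giving $|\mathscr{C}(p,\mathbf{x})|\le p^3$. (The paper records the congruence as $\beta_i\equiv x_i$ rather than $\beta_i\equiv -x_i$, a harmless sign slip; your version is the one consistent with the stated definitions.)
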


\begin{proof}
 Plugging  \eqref{maincharsum} into \eqref{finalchar}, we see that the character sum $\mathscr{C}(p, {\bf x})$ is given by
\begin{align*}
	\mathop{\sum \sum \sum }_{\alpha_{1}, \alpha_{2}, \alpha_{3} \, \text{mod} \, p}& \mathop{\sum \sum \sum}_{\beta_1, \beta_2, \beta_3 \bmod p}   \chi_{p} \big(  F(\beta_1, \beta_2, \beta_3)\big) \\
	&\times e\left(\frac{\alpha_{1}(x_{1}-\beta_1)+\alpha_{2}(x_2-\beta_2)+\alpha_{3}(x_3-\beta_3)}{p}\right).
\end{align*}
Now executing the sum over $\alpha_{i}$'s we get congruence conditions $\beta_i \equiv x_i  \bmod p$, for $i=1,2$ and $3$, which determine $\beta_i's $ uniquely. Hence the above sum is bounded by $p^3$.
\end{proof}

\section{\bf Estimates for $\mathcal{S}(Q, B)$}
We now decompose $\mathcal{S} (Q,B)$ in \eqref{S after poisson} as follows:
$$\mathcal{S} (Q,B) = \mathcal{S} ^{\flat}(Q,B) + \mathcal{S} ^{\sharp}(Q, B),$$
where
$$\mathcal{S}^{\flat}(Q,B) = \mathop{\sum \sum }_{\substack{q,q^{\prime} \\ q \neq q^{\prime} \\ (q,q^{\prime})>1}} (...), \quad \, \text{and} \, \, \quad \mathcal{S}^{\sharp}(Q, B) = \mathop{\sum \sum }_{\substack{q,q^{\prime}  \\ (q,q^{\prime})=1}}(...).$$

\subsection{Estimation of $\mathcal{S}^{\sharp}(Q, B)$} We recall that
$$\mathcal{S}^{\sharp}(Q, B) = \frac{B^{3}}{Q^{2}} \mathop{\sum \sum}_{\substack{q, q^{\prime} \sim^{\star} Q \\ (q,q^{\prime})=1 }} \frac{1}{(qq^{\prime})^3} \mathop{\sum \sum \sum}_{x_{1}, x_{2},x_{3} \ll \frac{Q^2}{B}B^{\epsilon}} \mathfrak{C}(qq^{\prime}, {\bf x}) \, \mathfrak{I}(qq^{\prime}, {\bf x}).$$

By the multiplicative property of $\mathfrak{C}(...)$, as given in Lemma \ref{multiprop}, it follows that
\begin{align*}
\mathcal{S}^{\sharp}(Q, B)& = \frac{B^{3}}{Q^{2}} \mathop{\sum \sum \sum}_{x_{1}, x_{2},x_{3} \ll \frac{Q^2}{B}B^{\epsilon}  }  \mathop{\sum \sum}_{p_{1} \sim P_{1}, p_{2} \sim P_{2}} \frac{1}{(p_{1}p_{2})^3} \mathfrak{C}\left(p_{1}, {\bf x}\right) \mathfrak{C}\left(p_{2}, {\bf x}\right)\\
& \quad \quad \quad \times \mathop{\sum \sum}_{\substack{p_{1}^{\prime}  \sim P_{1}, p_{2}^{\prime} \sim P_{2} \\  p_{1}^{\prime} \neq p_{1} \\ p_{2}^{\prime} \neq p_{2}}} \frac{1}{(p_{1}^{\prime} p_{2}^{\prime})^{3}}  \mathfrak{C}\left(p_{1}^{\prime}, {\bf x}\right) \mathfrak{C}\left(p_{2}^{\prime}, {\bf x}\right)\, \mathfrak{I}(p_{1}p_{1}^{\prime} p_{2} p_{2}^{\prime} ,{\bf x}). 
\end{align*}

We further decompose the above sum as
\begin{equation} \label{split1}
\mathcal{S}^{\sharp}(Q, B) = \mathcal{S}_{1}^{\sharp }(Q, B) - \mathcal{S}_{2}^{\sharp }(Q, B)- \mathcal{S}_{3}^{\sharp }(Q, B) + 2 \mathcal{S}_{4}^{\sharp }(Q, B),
\end{equation}
where 
$$\mathcal{S}_{1}^{\sharp }(Q, B) = \frac{B^{3}}{Q^{2}} \mathop{\sum \sum \sum}_{x_{1}, x_{2},x_{3} \ll \frac{Q^2}{B}B^{\epsilon}  }  \mathop{\sum \sum}_{p_{1} \sim P_{1}, p_{2} \sim P_{2}} \mathop{\sum \sum}_{\substack{p_{1}^{\prime} \sim P_{1}, p_{2}^{\prime} \sim P_{2}}} (...),$$

$$\mathcal{S}_{2}^{\sharp }(Q, B) = \frac{B^{3}}{Q^{2}} \mathop{\sum \sum \sum}_{x_{1}, x_{2},x_{3} \ll \frac{Q^2}{B}B^{\epsilon}  }  \mathop{\sum \sum}_{p_{1} \sim P_{1}, p_{2} \sim P_{2}}  \sum_{p_{1}^{\prime} \sim P_{1}} \sum_{\substack{p_{2}^{\prime} \sim P_{2} \\ p_{2}^{\prime} = p_{2}}} (...),$$

$$\mathcal{S}_{3}^{\sharp }(Q, B) = \frac{B^{3}}{Q^{2}} \mathop{\sum \sum \sum}_{x_{1}, x_{2},x_{3} \ll \frac{Q^2}{B}B^{\epsilon}  }  \mathop{\sum \sum}_{p_{1} \sim P_{1}, p_{2} \sim P_{2}}   \sum_{\substack{p_{1}^{\prime} \sim P_{1} \\ p_{1}^{\prime} = p_{1}}} \sum_{p_{2}^{\prime} \sim P_{2}} (...),$$
and 
$$\mathcal{S}_{4}^{\sharp }(Q, B) = \frac{B^{3}}{Q^{2}} \mathop{\sum \sum \sum}_{x_{1}, x_{2},x_{3} \ll \frac{Q^2}{B}B^{\epsilon}  }  \mathop{\sum \sum}_{p_{1} \sim P_{1}, p_{2} \sim P_{2}}   \sum_{\substack{p_{1}^{\prime} \sim P_{1} \\ p_{1}^{\prime} = p_{1}}} \sum_{\substack{p_{2}^{\prime} \sim P_{2} \\ p_{2}^{\prime} = p_{2}}}(...).$$

We now analyze $\mathcal{S}_{3}^{\sharp }(Q, B)$ and $\mathcal{S}_{4}^{\sharp }(Q, B)$ in the following lemma .

\begin{lemma} \label{s34sharp}
Let $\mathcal{S}_{3}^{\sharp }(Q, B)$ and $\mathcal{S}_{4}^{\sharp }(Q, B)$  be as in \eqref{split1}. Then we have
$$\mathcal{S}_{3}^{\sharp }(Q, B) \ll P_{1}^2 P_{2}^{3},$$
and
$$\mathcal{S}_{4}^{\sharp }(Q, B) \ll P_{1}^2 P_{2}^{2}.$$
\end{lemma}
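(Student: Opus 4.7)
The plan is to treat both sums by brute-force pointwise estimation on the character sums, because after forcing one or two of the equalities $p_1'=p_1$, $p_2'=p_2$ the moduli degenerate and there is no further cancellation to extract. The main inputs are Lemma \ref{c2bound} (square-root cancellation for $\mathfrak{C}(p,\mathbf{x})$) and the trivial bound $|\mathfrak{I}(qq',\mathbf{x})|\ll 1$ that follows immediately from the boundedness and compact support of $W$.

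For $\mathcal{S}_3^{\sharp}$, I would first substitute $p_1'=p_1$ inside \eqref{split1} and use the multiplicativity of $\mathfrak{C}$ (Lemma \ref{multiprop}) to rewrite the inner character sum as $\mathfrak{C}(p_1,\mathbf{x})^2\,\mathfrak{C}(p_2,\mathbf{x})\,\mathfrak{C}(p_2',\mathbf{x})$ divided by $(p_1^2p_2p_2')^3$. Applying $|\mathfrak{C}(p,\mathbf{x})|\ll p^{3/2}$ to each factor, the ratio is $\ll 1/(p_1^3 p_2^{3/2}{p_2'}^{3/2})$. Summing over primes gives $\sum_{p_1\sim P_1}p_1^{-3}\ll P_1^{-2}$ and $\sum_{p_2\sim P_2}p_2^{-3/2}\ll P_2^{-1/2}$, so the prime factors contribute $\ll 1/(P_1^2 P_2)$. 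The dual variables $\mathbf{x}$ run over a box of size $(Q^2/B)^3 B^{\epsilon}$, and together with the prefactor $B^3/Q^2$ this yields $\mathcal{S}_3^{\sharp}\ll (B^3/Q^2)\cdot(Q^2/B)^3\cdot 1/(P_1^2 P_2)=Q^4/(P_1^2 P_2)=P_1^2 P_2^3$ (up to $B^{\epsilon}$, which is absorbed), as required.

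For $\mathcal{S}_4^{\sharp}$ the argument is identical but with $p_2'=p_2$ also imposed, so the inner character sum becomes $\mathfrak{C}(p_1,\mathbf{x})^2\mathfrak{C}(p_2,\mathbf{x})^2$ and the ratio after applying Lemma \ref{c2bound} is $\ll 1/(p_1^3 p_2^3)$. Summing over primes costs $1/(P_1^2 P_2^2)$, and the same accounting as above turns $B^3/Q^2\cdot (Q^2/B)^3$ into $Q^4$, giving $\mathcal{S}_4^{\sharp}\ll Q^4/(P_1^2 P_2^2)=P_1^2 P_2^2$.

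There is essentially no conceptual obstacle here; the step that is easiest to get wrong is the accounting of exponents of $P_1$ and $P_2$, because the collapse $p_1'=p_1$ squares a single $\mathfrak{C}(p_1,\mathbf{x})$ and makes it size $p_1^{3}$, whereas naively pulling the bound from Lemma \ref{c2bound} twice would suggest $|\mathfrak{C}(p_1,\mathbf{x})|^2 \ll p_1^3$ against a denominator of $p_1^6$, not $p_1^3$. Writing the sum out with $(p_1^2 p_2 p_2')^3$ in the denominator at the outset, as dictated by the multiplicative factorisation, keeps the exponent tracking straightforward and produces the stated bounds.
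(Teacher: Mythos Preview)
Your proposal is correct and follows essentially the same approach as the paper: bound each $\mathfrak{C}(p,\mathbf{x})$ by $p^{3/2}$ via Lemma \ref{c2bound}, bound $\mathfrak{I}$ trivially, and execute all the remaining sums. The paper's own proof is terse (``using Lemma \ref{c2bound} to bound the character sums and executing all the sum trivially''), and your write-up simply spells out the exponent bookkeeping that the paper leaves implicit.
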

\begin{proof}
Consider the sum $\mathcal{S}_{3}^{\sharp }(Q, B)$, which is given by

\begin{align*}
& \frac{B^{3}}{Q^{2}} \mathop{\sum \sum \sum}_{x_{1}, x_{2},x_{3} \ll \frac{Q^2}{B}B^{\epsilon}  }  \mathop{\sum \sum}_{p_{1} \sim P_{1}, p_{2} \sim P_{2}} \frac{1}{(p_{1}p_{2})^3} \mathfrak{C}\left(p_{1}, {\bf x}\right) \mathfrak{C}\left(p_{2}, {\bf x}\right)\\
& \quad \quad \quad \times \mathop{\sum \sum}_{\substack{p_{1}^{\prime}  \sim P_{1}, p_{2}^{\prime} \sim P_{2}  \\ p_{1}^{\prime} = p_{1}}} \frac{1}{(p_{1}^{\prime} p_{2}^{\prime})^{3}}  \mathfrak{C}\left(p_{1}^{\prime}, {\bf x}\right) \mathfrak{C}\left(p_{2}^{\prime}, {\bf x}\right)\, \mathfrak{I}(p_{1}p_{1}^{\prime} p_{2} p_{2}^{\prime} ,{\bf x}).
\end{align*} 
Now using Lemma \ref{c2bound} to bound the character sums and executing all the sum trivially we get the required bound for $\mathcal{S}_{3}^{\sharp }(Q, B)$. Analysing $\mathcal{S}_{4}^{\sharp }(Q, B)$ in a similar way, we get bounds for this sum. Hence the lemma follows.
\end{proof}

In the following lemma we will estimate $\mathcal{S}_{1}^{\sharp }(Q, B)$ and $\mathcal{S}_{2}^{\sharp }(Q, B)$. In fact we  get better bounds for  $\mathcal{S}_{2}^{\sharp }(Q, B)$ compared to $\mathcal{S}_{1}^{\sharp }(Q, B)$. 

\begin{lemma} \label{midlelemma}
Let $\mathcal{S}_{1}^{\sharp }(Q, B)$ and $\mathcal{S}_{2}^{\sharp }(Q, B)$ be as in \eqref{split1}. Then we have
$$\mathcal{S}_{1}^{\sharp}(Q, B) \ll    \mathcal{S}_{1}^{\sharp \sharp}(Q, B)$$
and
$$\mathcal{S}_{2}^{\sharp}(Q, B) \ll \frac{\mathcal{S}_{1}^{\sharp \sharp}(Q, B)}{P_{2}},$$
where $\mathcal{S}_{1}^{\sharp \sharp}(Q, B)$ is given by
\begin{align} \label{sdoublesharp}
 \frac{B^{3 +\epsilon}}{Q^2 P_{2}P_{1}^6} \mathop{\sum \sum}_{p_{1}, p_{1}^{\prime}  \sim P_{1}}   \Big \vert \mathop{\sum \sum \sum}_{\substack{x_{1}, x_{2},x_{3} \in \mathbb{Z} } } \mathfrak{C}(p_{1},{\bf x}) \,  \mathfrak{C}(p_{1}^{\prime},{\bf x})   V \left( \frac{Bx_{1}}{Q^2 B^{\epsilon}}, \frac{Bx_{2}}{Q^2 B^{\epsilon}}, \frac{Bx_{3}}{Q^2 B^{\epsilon}}\right)  \Big \vert, 
\end{align}
where $V$ is a smooth bump function.   
\end{lemma}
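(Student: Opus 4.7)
The plan is to reduce $\mathcal{S}_1^{\sharp}(Q,B)$ and $\mathcal{S}_2^{\sharp}(Q,B)$ to the shape of $\mathcal{S}_1^{\sharp\sharp}(Q,B)$ by carrying out the summations over the smaller primes $p_2$ and $p_2'$ trivially, leaving only the $p_1, p_1'$ character sums and the $\mathbf{x}$-sum to be analyzed at a later stage. The two ingredients are a uniform pointwise bound for the oscillatory integral $\mathfrak{I}$ and the trivial size estimate for the character sums $\mathfrak{C}(p_2, \mathbf{x})$, $\mathfrak{C}(p_2', \mathbf{x})$ coming from Lemma \ref{c2bound}.

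First, I would apply repeated integration by parts to the integral in \eqref{theintegral}, exploiting the smoothness and compact support of $W$, to obtain
$$|\mathfrak{I}(qq', \mathbf{x})| \ll_N \prod_{i=1}^{3} \left(1 + \frac{|x_i| B}{qq'}\right)^{-N}$$
for every $N \geq 1$. Since $qq' = p_1 p_1' p_2 p_2' \asymp Q^2$ uniformly over the relevant primes, this confines the essential support to $|x_i| \ll Q^2 B^{\epsilon}/B$; at the cost of a harmless factor $B^{\epsilon}$, $|\mathfrak{I}|$ may then be majorized pointwise by a smooth nonnegative bump
$$V\left(\frac{Bx_1}{Q^2 B^{\epsilon}}, \frac{Bx_2}{Q^2 B^{\epsilon}}, \frac{Bx_3}{Q^2 B^{\epsilon}}\right),$$
which is independent of all four primes. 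Simultaneously, Lemma \ref{c2bound} gives $|\mathfrak{C}(p_2, \mathbf{x})|, |\mathfrak{C}(p_2', \mathbf{x})| \ll P_2^{3/2}$, uniformly in $\mathbf{x}$.

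Combining these, I would apply the triangle inequality to move the sum $\sum_{p_2, p_2'}$ past the modulus and outside the inner $\mathbf{x}$-sum, then insert the bounds above. The primes $p_2, p_2' \sim P_2$ contribute $\ll P_2^2/(\log P_2)^2$ terms; together with the character bounds $P_2^{3/2} \cdot P_2^{3/2} = P_2^3$ and the denominator $(p_2 p_2')^3 \asymp P_2^6$, the net factor from the $p_2, p_2'$ side is $\ll P_2^{-1}$, which exactly produces the prefactor $\frac{B^{3+\epsilon}}{Q^2 P_2 P_1^6}$ appearing in $\mathcal{S}_1^{\sharp\sharp}$, leaving behind precisely the sum $\sum_{p_1, p_1' \sim P_1} |\sum_{\mathbf{x}} \mathfrak{C}(p_1, \mathbf{x}) \mathfrak{C}(p_1', \mathbf{x}) V(\cdots)|$. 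For $\mathcal{S}_2^{\sharp}$, the diagonal restriction $p_2' = p_2$ collapses the double sum over $p_2, p_2'$ to a single sum with $\ll P_2/\log P_2$ terms, so the trivial contribution shrinks from $P_2^5$ to $P_2^4$; this gives the additional savings of $P_2$ and hence the sharper bound $\mathcal{S}_2^{\sharp} \ll \mathcal{S}_1^{\sharp\sharp}/P_2$.

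The main obstacle is notational rather than analytic: $\mathfrak{I}(p_1 p_1' p_2 p_2', \mathbf{x})$ couples all four primes with $\mathbf{x}$ in one oscillatory integral, so the $p_2, p_2'$ summations are not literally independent of the $\mathbf{x}$-sum until a uniform majorant is introduced. The crucial point is that the uniformity $qq' \asymp Q^2$ across all admissible $(p_1, p_1', p_2, p_2')$ permits a single $p_i$-independent bump $V$ to dominate every instance of $|\mathfrak{I}|$; this is what allows the $p_2, p_2'$ sums to be completed as pure scalar factors while retaining the absolute-value structure of the $\mathbf{x}$-sum in $\mathcal{S}_1^{\sharp\sharp}$, which will presumably be exploited via Cauchy--Schwarz in the next step of the argument.
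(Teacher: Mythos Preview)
Your argument has a genuine gap at the step where you ``insert the bounds'' for $\mathfrak{C}(p_2,\mathbf{x})$, $\mathfrak{C}(p_2',\mathbf{x})$ and $\mathfrak{I}$. After pulling the $p_2,p_2'$-sum outside by the triangle inequality, you are left with
\[
\Bigl|\sum_{\mathbf{x}} \mathfrak{C}(p_1,\mathbf{x})\,\mathfrak{C}(p_1',\mathbf{x})\,\mathfrak{C}(p_2,\mathbf{x})\,\mathfrak{C}(p_2',\mathbf{x})\,\mathfrak{I}(p_1p_1'p_2p_2',\mathbf{x})\Bigr|.
\]
The factors $\mathfrak{C}(p_2,\mathbf{x})$, $\mathfrak{C}(p_2',\mathbf{x})$ and $\mathfrak{I}(\cdots)$ all depend on $\mathbf{x}$, so the pointwise inequalities $|\mathfrak{C}(p_2,\mathbf{x})|\ll P_2^{3/2}$ and $|\mathfrak{I}|\le V$ cannot be inserted while keeping the absolute value \emph{outside} the $\mathbf{x}$-sum. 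Using them forces the absolute value inside, yielding $\sum_{\mathbf{x}}|\mathfrak{C}(p_1,\mathbf{x})||\mathfrak{C}(p_1',\mathbf{x})|V(\cdots)$, which is not bounded by $\mathcal{S}_1^{\sharp\sharp}$; indeed, this larger quantity only produces the trivial bound $P_1^3P_2^3$, far short of $P_1^2P_2^3+B^3/P_2^3$, and the second Poisson step in the next lemma becomes unavailable.

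The paper avoids this by first \emph{decoupling} the $p_1,p_1'$- and $p_2,p_2'$-dependence of $\mathfrak{I}$: a change of variable $y_i\mapsto p_1p_1'y_i$ moves all $p_1,p_1'$-dependence into the weight $W(p_1p_1'y_1,\dots)$ and all $p_2,p_2'$-dependence into a unimodular phase $e(-B\,\mathbf{x}\cdot\mathbf{y}/(p_2p_2'))$. One then takes absolute values for each fixed $(\mathbf{x},\mathbf{y})$ and bounds the $p_2,p_2'$-block trivially. A Mellin inversion in $W$ turns the remaining $p_1,p_1'$-sum into a genuine square $|\sum_{p_1}\mathfrak{C}(p_1,\mathbf{x})p_1^{-it}|^2$; this nonnegativity is the key that permits majorizing the $\mathbf{x}$-truncation by a bump $V$, after which opening the square and interchanging gives an honest, unabsoluted $\sum_{\mathbf{x}}\mathfrak{C}(p_1,\mathbf{x})\mathfrak{C}(p_1',\mathbf{x})V(\cdots)$. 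That is precisely the structure needed for the subsequent Poisson summation.
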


\begin{proof}
By making a change of variable $y_{i} \rightarrow p_{1} p_{1}^{\prime} y_{i}$ for $i=1,2,3$ in \eqref{theintegral} we see that $\mathfrak{I}(p_{1}p_{1}^{\prime} p_{2}p_{2}^{\prime},{\bf x})$ tranforms into
$$ (p_{1}p_{1}^{\prime})^3 \iiint_{\mathbb{R}^3} W\left(p_{1}p_{1}^{\prime} y_{1}, p_{1}p_{1}^{\prime}y_{2}, p_{1}p_{1}^{\prime} y_{3}\right) \, e \left(  \frac{-B(x_1 y_1 + x_2 y_2 +x_3 y_3)}{p_{2}p_{2}^{\prime}}\right) d y_1 dy_2 dy_3 .$$

Upon substituting  this expression in place of  $\mathfrak{I}(p_{1}p_{1}^{\prime} p_{2}p_{2}^{\prime},{\bf x})$ in $\mathcal{S}_{1}^{\sharp}(Q, B)$, we see that $\mathcal{S}_{1}^{\sharp}(Q, B)$ is bounded by  
\begin{align*}
 & \frac{B^{3}}{Q^{2}} \iiint_{[\frac{-2}{P_{1}^2},\frac{2}{P_{1}^2}]^{3}}  \mathop{\sum \sum \sum}_{x_{1}, x_{2},x_{3} \ll \frac{Q^2}{B}B^{\epsilon}  }  \Big \vert \mathop{\sum \sum}_{p_{1} \sim P_{1}, p_{1}^{\prime} \sim P_{1}}  \mathfrak{C}\left(p_{1}, {\bf x}\right) \, \mathfrak{C}\left(p_{1}^{\prime}, {\bf x}\right)\, W\left(p_{1}p_{1}^{\prime} y_{1},.., p_{1}p_{1}^{\prime} y_{3}\right) \Big \vert\\
& \times \Big \vert \mathop{\sum \sum}_{p_{2}, p_{2}^{\prime} \sim P_{2}} \frac{1}{(p_{2}p_{2}^{\prime})^{3}}  \,\mathfrak{C}\left( p_{2}, {\bf x}\right) \,\mathfrak{C}\left( p_{2}^{\prime}, {\bf x}\right) \, e \left( -  \frac{x_1 y_1  B + x_2 y_2 B  +  x_3 y_3  B }{p_{2} p_{2}^\prime}\right)  \Big \vert d y_1 \ dy_2 \ dy_3 .
\end{align*}

We estimate the terms written after $\times$  in the above expression using  $$\mathfrak{C}(p_{2},{\bf x}), \mathfrak{C}(p_{2}^{\prime},{\bf x}) \ll P_{2}^{3/2} $$  from Lemma \ref{c2bound}. Therefore we get

\begin{align}\label{S1 in lemma 52}
\mathcal{S}_{1}^{\sharp}(Q, B) \leq &  \frac{B^{3}}{Q^{2} P_{2}} \iiint_{[\frac{-2}{P_{1}^2},\frac{2}{P_{1}^2}]^{3}}  \mathop{\sum \sum \sum}_{x_{1}, x_{2},x_{3} \ll \frac{Q^2}{B}B^{\epsilon}  } \notag \\
&\times  \Big \vert \mathop{\sum \sum}_{p_{1} \sim P_{1}, p_{1}^{\prime} \sim P_{1}}  \mathfrak{C}\left(p_{1}, {\bf x}\right) \, \mathfrak{C}\left(p_{1}^{\prime}, {\bf x}\right)\, W\left(p_{1}p_{1}^{\prime} y_{1}, p_{1}p_{1}^{\prime}y_{2}, p_{1}p_{1}^{\prime} y_{3}\right) \Big \vert dy_{1} \, dy_{2} \, dy_{3}.
\end{align}
We use the Mellin inversion formula to separate  $p_{i}$ from $p_{i}^{\prime}$ in  the weight function $W(p_{1}p_{1}^{\prime} y_{1}, p_{1}p_{1}^{\prime}y_{2}, p_{1}p_{1}^{\prime} y_{3})$. Infact, we may also assume that 
$$W(y_{1}, y_{2}, y_{3}) = W_{1}(y_{1}) W_{2} (y_{2}) W_{3}(y_{3}),$$ and hence $W(p_{1}p_{1}^{\prime} y_{1}, p_{1}p_{1}^{\prime}y_{2}, p_{1}p_{1}^{\prime} y_{3})$ can be rewritten  as

%\begin{align*}
%\frac{1}{(2 \pi  )^3} \int_{\epsilon -i \infty}^{\epsilon +i \infty} \int_{ \epsilon -i \infty}^{\epsilon +i \infty}  & \int_{\epsilon-i \infty}^{\epsilon+ i \infty} \, \widehat{W}_{1}(\epsilon + it_{1}) \widehat{W}_{2}(\epsilon + it_{2}) \widehat{W}_{3}(\epsilon + it_{3}) \left(p_{1}p_{1}^{\prime} \right)^{\epsilon + i(t_{1}+t_{2}+t_{3})} \\
%& \times  y_{1}^{\epsilon+ it_{1}} y_{2}^{\epsilon + i t_{2}} y_{3}^{\epsilon + it_{3}} \frac{dt_{1}dt_{2} dt_{3}}{(\epsilon + it_{1})(\epsilon + it_{2}) (\epsilon + it_{3})}
%\end{align*}

\begin{align}\label{w after melin}
	&\frac{1}{(2 \pi  )^3} \iiint_{\mathbb{R}^3} \widehat{W}_{1}( it_{1}) \widehat{W}_{2}( it_{2}) \widehat{W}_{3}( it_{3}) \notag \\
	& \times  \left(p_{1}p_{1}^{\prime} \right)^{- i(t_{1}+t_{2}+t_{3})}y_{1}^{- it_{1}} y_{2}^{-i t_{2}} y_{3}^{ -it_{3}} dt_{1}dt_{2} dt_{3},
\end{align}
where $\widehat{W}_{j}(s)$ is the Mellin transform of $W_{j}$ for $j =1,2,3$. Since $W_{j}$'s are smooth bump functions, it can be seen easily (using integration by parts repeatedly) that $\widehat{W}_{j}( it_{j})$'s are negligibly small if $t_j \gg B^{\epsilon}$ for some $j$.  Therefore, after plugging \eqref{w after melin} in \eqref{S1 in lemma 52},  the expression inside $| \ |$ in \eqref{S1 in lemma 52} is dominated by 
\begin{align*}
	\iiint_{[-B^{\epsilon},  B^{\epsilon}]^{3}} \Big \vert  \sum_{p_{1} \sim P_{1}} \, \mathfrak{C}(p_{1}, {\bf x}) \, p_{1}^{ -i (t_{1} + t_{2}+t_{3})}\Big \vert^2 dt_{1}dt_{2} dt_{3}.
\end{align*}
  Hence, after estimating integral over $y_i$'s trivially, we see that $\mathcal{S}_{1}^{\sharp}(Q, B)$ is bounded by
$$  \frac{B^{3 +\epsilon}}{Q^2 P_{2}P_{1}^6} \iiint_{[-B^{\epsilon},  B^{\epsilon}]^{3}}\mathop{\sum \sum \sum}_{x_{1}, x_{2},x_{3} \ll \frac{Q^2}{B}B^{\epsilon} } \Big \vert  \sum_{p_{1} \sim P_{1}} \, \mathfrak{C}(p_{1}, {\bf x}) \, p_{1}^{- i (t_{1} + t_{2}+t_{3})}\Big \vert^2dt_{1}dt_{2} dt_{3}.$$
 Now we smooth out  the sum over $x_{i}$ by introducing a bump function $V$. Then we open the absolute value square, interchanging  $p_{1}, p_{1}^{\prime}$ sums with the sums over $x_{i}$'s, and  bounding integrals trivially, we get the first part of the lemma.
Similar analysis gives the second part of the lemma. In fact the extra saving $P_{2}$ comes from the fact that the sum over  $p_{2}^{\prime}$ is absent in $\mathcal{S}_{2}^{\sharp}(Q, B)$.  
\end{proof}

\begin{lemma} \label{afterpoisson}
We have
$$\mathcal{S}_{1}^{\sharp \sharp}(Q, B)  \ll P_{1}^2 P_{2}^{3} + \frac{P_{2}^3 B^{\epsilon}}{P_{1}^2} \mathop{\sum \sum }_{\substack{p_{1}, p_{1}^{\prime} \sim P_{1} \\ p_{1} \neq p_{1}^{\prime}}} \frac{1}{(p_{1}p_{1}^{\prime})^3} \, \mathop{\sum \sum \sum}_{x_{1}, x_{2}, x_{3} \ll \frac{p_{1}p_{1}^{\prime} B}{Q^2} B^{\epsilon}} \, \Big \vert \mathcal{C}(x_{1},x_{2},x_{3}; p_{1}, p_{2}) \Big \vert,$$
where $\mathcal{C}(x_{1},x_{2},x_{3}; p_{1}, p_{2})$ is as given in \eqref{charsum}.

\end{lemma}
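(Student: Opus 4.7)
The plan is to split the double sum in the definition of $\mathcal{S}_{1}^{\sharp\sharp}(Q,B)$ according to whether $p_{1}=p_{1}'$ (the diagonal) or $p_{1}\neq p_{1}'$ (the off-diagonal), handle the diagonal trivially via square-root cancellation, and apply Poisson summation to the $\mathbf{x}$-sum in the off-diagonal range.

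First, I would dispose of the diagonal contribution $p_{1}=p_{1}'$. On this locus one has $\mathfrak{C}(p_{1},\mathbf{x})\,\mathfrak{C}(p_{1}',\mathbf{x})=|\mathfrak{C}(p_{1},\mathbf{x})|^{2}\ll p_{1}^{3}$ by Lemma \ref{c2bound}. Since $V$ is supported where $|x_{i}|\ll Q^{2}B^{\epsilon}/B$, the trivial count of $\mathbf{x}$ in that box is $(Q^{2}B^{\epsilon}/B)^{3}$, and the outer $p_{1}$-sum contributes a factor $P_{1}$. Combining with the prefactor $B^{3+\epsilon}/(Q^{2}P_{2}P_{1}^{6})$ and using $Q=P_{1}P_{2}$, the diagonal is
\[
\ll \frac{B^{3+\epsilon}}{Q^{2}P_{2}P_{1}^{6}}\cdot P_{1}\cdot P_{1}^{3}\cdot\frac{Q^{6}B^{\epsilon}}{B^{3}}=\frac{Q^{4}B^{2\epsilon}}{P_{2}P_{1}^{2}}=P_{1}^{2}P_{2}^{3}B^{2\epsilon},
\]
which yields the first term on the right-hand side.

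Next, for the off-diagonal $p_{1}\neq p_{1}'$, I set $m=p_{1}p_{1}'$ and exploit the fact that $\mathfrak{C}(p_{1},\mathbf{x})\mathfrak{C}(p_{1}',\mathbf{x})$ is periodic in $\mathbf{x}$ modulo $m$. Writing $\mathbf{x}=\boldsymbol{\beta}+m\mathbf{n}$ with $\boldsymbol{\beta}\bmod m$ and $\mathbf{n}\in\mathbb{Z}^{3}$, I pull the character sum out of the $\mathbf{n}$-summation and apply Poisson to the latter. Letting $\mathbf{k}\in\mathbb{Z}^{3}$ denote the dual frequency, the Jacobian produces a factor $(Q^{2}B^{\epsilon}/(Bm))^{3}$ and the phase $e(\mathbf{k}\cdot\boldsymbol{\beta}/m)$ attaches to $\boldsymbol{\beta}$. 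Using repeated integration by parts on the Fourier transform of the smooth bump $V$, the $\mathbf{k}$-sum is truncated at $|k_{i}|\ll mB/Q^{2}\cdot B^{\epsilon}=p_{1}p_{1}'B/Q^{2}\cdot B^{\epsilon}$ up to a negligible error.

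After this Poisson step, the inner $\boldsymbol{\beta}$-sum becomes precisely the complete exponential sum
\[
\sum_{\boldsymbol{\beta}\bmod p_{1}p_{1}'}\mathfrak{C}(p_{1},\boldsymbol{\beta})\,\mathfrak{C}(p_{1}',\boldsymbol{\beta})\,e\!\left(\frac{\mathbf{k}\cdot\boldsymbol{\beta}}{p_{1}p_{1}'}\right),
\]
which is $\mathcal{C}(k_{1},k_{2},k_{3};p_{1},p_{1}')$ from \eqref{charsum}. Inserting the Jacobian factor into the prefactor and simplifying with $Q=P_{1}P_{2}$,
\[
\frac{B^{3+\epsilon}}{Q^{2}P_{2}P_{1}^{6}}\cdot\Big(\frac{Q^{2}B^{\epsilon}}{Bp_{1}p_{1}'}\Big)^{3}=\frac{P_{2}^{3}B^{4\epsilon}}{P_{1}^{2}}\cdot\frac{1}{(p_{1}p_{1}')^{3}},
\]
which is exactly the multiplier in front of the claimed sum over $p_{1},p_{1}',\mathbf{k}$.

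The main technical point will be the clean bookkeeping of the Poisson step: the CRT decomposition of the $\boldsymbol{\beta}\bmod p_{1}p_{1}'$ sum is what lets one genuinely identify the output as the character sum $\mathcal{C}(\cdot;p_{1},p_{1}')$, and the truncation of the dual variable must be justified via the Schwartz decay of $\widehat{V}$. Everything else—bounding the diagonal, rescaling, combining prefactors—is routine.
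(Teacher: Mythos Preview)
Your proposal is correct and follows essentially the same route as the paper: split into the diagonal $p_{1}=p_{1}'$ (bounded trivially via Lemma~\ref{c2bound}) and the off-diagonal, then apply Poisson summation in the $\mathbf{x}$-variable with modulus $p_{1}p_{1}'$, truncate the dual sum by repeated integration by parts, and identify the resulting complete exponential sum as $\mathcal{C}(\cdot;p_{1},p_{1}')$. One small remark: no CRT decomposition is actually needed at this stage, since $\mathcal{C}$ in \eqref{charsum} is defined directly as the sum over residues modulo $p_{1}p_{1}'$; the CRT splitting only enters in the subsequent lemma.
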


\begin{proof}
Recall from \eqref{sdoublesharp} that the expression for $\mathcal{S}_{1}^{\sharp \sharp}(Q, B)$ is given by
\begin{align*}
	\frac{B^{3 +\epsilon}}{Q^2 P_{2}P_{1}^6} \mathop{\sum \sum}_{p_{1}, p_{1}^{\prime}  \sim P_{1}}   \Big \vert \mathop{\sum \sum \sum}_{\substack{x_{1}, x_{2},x_{3} \in \mathbb{Z} } } \mathfrak{C}(p_{1},{\bf x}) \,  \mathfrak{C}(p_{1}^{\prime},{\bf x})   V \left( \frac{Bx_{1}}{Q^2 B^{\epsilon}}, \frac{Bx_{2}}{Q^2 B^{\epsilon}}, \frac{Bx_{3}}{Q^2 B^{\epsilon}}\right)  \Big \vert, 
\end{align*}
where $V(x,y,z)$ is a bump function. By Lemma \ref{c2bound}, it follows that the diagonal $p_{1} = p_{1}^{\prime}$ contributes at most
$$P_{1}^2 P_{2}^3$$
to  $\mathcal{S}_{1}^{\sharp \sharp}(Q, B)$, thus getting the first term in the statement of the lemma.  In the off-diagonal case, that is when $p_{1} \neq p_{1}^{\prime}$, we apply the Poisson summation formula to the sum over $x_{i}$'s . After an application of the Poisson summation formula,  the sum over $x_{i}$ in $\mathcal{S}_{1}^{\sharp \sharp}(Q, B)$ becomes

$$\frac{Q^6 B^{\epsilon}}{B^3 (p_{1}p_{1}^{\prime})^3} \mathop{\sum \sum \sum }_{x_{1}, x_{2}, x_{3} \in \mathbb{Z}} \mathcal{C}(x_{1},x_{2},x_{3}, \gamma_{3} ,\gamma_{3}^{\prime}) \, \mathscr{I}(x_{1},x_{2},x_{3})$$
where

\begin{align}
\mathcal{C}(...) = \mathop{\sum \sum \sum }_{\substack{\alpha_{1} \, \alpha_{2}, \alpha_{3} \, \text{mod} \, p_{1} p_{1}^{\prime} }} \mathfrak{C}(p_{1},{\bf \alpha}) \,  \mathfrak{C}(p_{1}^{\prime},{\bf \alpha}) \, e\left(\frac{\alpha_{1} x_{1}+ \alpha_{2} x_{2}+\alpha_{3}x_{3}}{p_{1}p_{1}^{\prime}}\right) \label{charsum}
\end{align}

and
$$\mathscr{I}(...) = \iiint_{\mathbb{R}^3} \, V(y_{1}, y_{2}, y_{3}) \, e\left(-\frac{(x_{1}y_{1}+x_{2}y_{2}+x_{3}y_{3}) (Q^2B^{\epsilon}/B)}{p_{1}p_{1}^{\prime}}\right) \, dy_{1} \, dy_{2} \, dy_{3}.$$
Upon using  integration by parts repeatedly we see that $\mathscr{I}(...)$ is negligibly small unless
$$x_{i} \ll \frac{p_{1}p_{1}^{\prime} B}{Q^2} B^{\epsilon}, \quad \,  \text{for all} \, i=1,2,3.$$
Note that we have $\vert \mathscr{I}(...) \vert \leq 1$. Hence we obtain the lemma.

\end{proof}

\begin{lemma}
Let ${\bf x}=(x_{1},x_{2},x_{3}) \in \mathbb{Z}^{3}$ be any triplet of integers. For any distinct primes $p_{1}, p_{1}^{\prime}$, we have
$$\mathcal{C}(x_{1},x_{2},x_{3}; p_{1}, p_{1}^{\prime}) = \mathscr{C}(p_{1},\bar{p_{1}^{\prime}} {\bf x}) \, \,  \mathscr{C}(p_{1}^{\prime}, \bar{p_{1}} {\bf x})$$
where $\ell {\bf x}:= (\ell x_{1}, \ell x_{2}, \ell x_{3})$ and $\mathscr{C}(p, {\bf x})$ is as given in \eqref{finalcharsum}.

\end{lemma}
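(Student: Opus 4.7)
The plan is to apply the Chinese Remainder Theorem to the variables $\alpha_1,\alpha_2,\alpha_3 \pmod{p_1p_1'}$. Since $p_1\ne p_1'$ are distinct primes, every residue $\alpha_i\pmod{p_1p_1'}$ has a unique representation $\alpha_i \equiv \alpha_i^{(1)}p_1' + \alpha_i^{(2)}p_1 \pmod{p_1p_1'}$ with $\alpha_i^{(1)}$ running modulo $p_1$ and $\alpha_i^{(2)}$ modulo $p_1'$. First I would substitute this into the definition of $\mathcal{C}(x_1,x_2,x_3;p_1,p_1')$ and observe that the exponential factorises as
$$ e\!\left(\frac{\alpha_1x_1+\alpha_2x_2+\alpha_3x_3}{p_1p_1'}\right) \;=\; e\!\left(\frac{\alpha^{(1)}\!\cdot x}{p_1}\right) e\!\left(\frac{\alpha^{(2)}\!\cdot x}{p_1'}\right). $$

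Next, since $\mathfrak{C}(p_1,\boldsymbol{\alpha})$ depends only on $\boldsymbol{\alpha}\pmod{p_1}$ and $\alpha_i\equiv p_1'\alpha_i^{(1)}\pmod{p_1}$, we have $\mathfrak{C}(p_1,\boldsymbol{\alpha})=\mathfrak{C}(p_1,p_1'\boldsymbol{\alpha}^{(1)})$, and likewise $\mathfrak{C}(p_1',\boldsymbol{\alpha})=\mathfrak{C}(p_1',p_1\boldsymbol{\alpha}^{(2)})$. Because the summand now separates completely into a $\boldsymbol{\alpha}^{(1)}$-part and a $\boldsymbol{\alpha}^{(2)}$-part, the triple sum over residues mod $p_1p_1'$ breaks up as the product
$$ \mathcal{C}(x;p_1,p_1') = \Bigl(\sum_{\boldsymbol{\alpha}^{(1)}\bmod p_1}\mathfrak{C}(p_1,p_1'\boldsymbol{\alpha}^{(1)})\,e\bigl(\tfrac{\boldsymbol{\alpha}^{(1)}\!\cdot x}{p_1}\bigr)\Bigr)\Bigl(\sum_{\boldsymbol{\alpha}^{(2)}\bmod p_1'}\mathfrak{C}(p_1',p_1\boldsymbol{\alpha}^{(2)})\,e\bigl(\tfrac{\boldsymbol{\alpha}^{(2)}\!\cdot x}{p_1'}\bigr)\Bigr). $$

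Finally, in the first factor I change variables by setting $\boldsymbol{\beta}=p_1'\boldsymbol{\alpha}^{(1)}\pmod{p_1}$, which is a bijection on residues modulo $p_1$ since $\gcd(p_1',p_1)=1$; then $\boldsymbol{\alpha}^{(1)}=\overline{p_1'}\boldsymbol{\beta}\pmod{p_1}$ and the first factor becomes $\sum_{\boldsymbol{\beta}\bmod p_1}\mathfrak{C}(p_1,\boldsymbol{\beta})\,e\bigl(\overline{p_1'}\boldsymbol{\beta}\!\cdot x/p_1\bigr) = \mathscr{C}(p_1,\overline{p_1'}x)$ by \eqref{finalchar}. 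The identical manoeuvre on the second factor produces $\mathscr{C}(p_1',\overline{p_1}x)$, giving the claimed identity. No part of this argument presents a genuine obstacle; the only thing to be careful about is the bookkeeping of which exponent corresponds to which modulus under CRT, and verifying that the change of variable $\boldsymbol{\alpha}^{(1)}\mapsto\overline{p_1'}\boldsymbol{\beta}$ transforms $\mathfrak{C}(p_1,p_1'\boldsymbol{\alpha}^{(1)})$ correctly into $\mathfrak{C}(p_1,\boldsymbol{\beta})$.
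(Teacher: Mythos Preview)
Your proof is correct and follows exactly the same route as the paper: the Chinese Remainder decomposition $\alpha_i=\alpha_i^{(1)}p_1'+\alpha_i^{(2)}p_1$ with $\alpha_i^{(1)}\bmod p_1$, $\alpha_i^{(2)}\bmod p_1'$, followed by the observation that the summand separates and a bijective change of variables. The paper's own proof is in fact terser than yours---it simply records the CRT splitting and declares the lemma follows---so your version spells out the bookkeeping that the paper leaves implicit.
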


\begin{proof}

The lemma follows by noting the fact that  any $\alpha_{i} \, \rm mod \, (p_{1} p_{1}^{\prime})$  in \eqref{charsum} can be written as
$$\alpha_{i} = \alpha_{i}^{\prime \prime} \, p_{1}^{\prime}  + \alpha_{i}^{\prime} \, p_{1}  , \quad \, \alpha_{i}^{\prime} \, \rm mod \, p_{1}^{\prime}, \,\, \alpha_{i}^{\prime \prime} \, \rm mod \, p_{1}$$
where $\bar{p}_{1}$ is the inverse of $p_{1}$ modulo $p_{1}^{\prime}$ and $\bar{p}_{1}^{\prime}$ is the inverse of $p_{1}^{\prime}$ modulo $p_{1}$.
\end{proof}

%\begin{remark} \label{rema}
%Recall that our choice of parameters $P_{1}= B^{\frac{3}{5}-\epsilon}$ and $P_{2} = P_{1}^{\frac{1}{2}+\epsilon}$. Note that the variables $x_{i} \ll \frac{B^{1+\epsilon}}{P_{2}^2}$ and observe that $B^{1+\epsilon}/P_{2}^2 < P_{1}$. Therefore, if $p_{1} \sim P_{1}$ and $p_{1} \mid x_{i}$, then $x_{i}=0$. 
%\end{remark}

\begin{lemma} \label{s1sharpbound}
We have
\begin{equation}
\mathcal{S}_{1}^{\sharp}(Q, B)  \ll P_{1}^2 P_{2}^{3} + \frac{B^{3}}{P_{2}^{3}} B^{\epsilon}.
\end{equation}
\end{lemma}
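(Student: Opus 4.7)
The plan is to combine the three lemmas already proved: Lemma \ref{midlelemma} gives $\mathcal{S}_{1}^{\sharp}(Q,B)\ll \mathcal{S}_{1}^{\sharp\sharp}(Q,B)$, Lemma \ref{afterpoisson} splits $\mathcal{S}_{1}^{\sharp\sharp}(Q,B)$ into a diagonal contribution (which already produces the first term $P_{1}^{2}P_{2}^{3}$) plus an off-diagonal sum involving the character sum $\mathcal{C}(x_{1},x_{2},x_{3};p_{1},p_{1}^{\prime})$, and the preceding factorization lemma writes this character sum as $\mathscr{C}(p_{1},\bar p_{1}^{\prime}\mathbf{x})\,\mathscr{C}(p_{1}^{\prime},\bar p_{1}\mathbf{x})$. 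All that remains is to estimate the off-diagonal sum trivially.

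First, I would apply Lemma \ref{finalcharsum}, which gives $\mathscr{C}(p,\mathbf{x})\ll p^{3}$ for any prime $p$ and any $\mathbf{x}$; by the factorization we therefore get $|\mathcal{C}(x_{1},x_{2},x_{3};p_{1},p_{1}^{\prime})|\ll (p_{1}p_{1}^{\prime})^{3}$, uniformly in $\mathbf{x}$. Plugging this into the bound of Lemma \ref{afterpoisson} cancels the factor $(p_{1}p_{1}^{\prime})^{-3}$ entirely, so the off-diagonal part becomes
\[
\frac{P_{2}^{3}B^{\epsilon}}{P_{1}^{2}}\mathop{\sum\sum}_{\substack{p_{1},p_{1}^{\prime}\sim P_{1}\\ p_{1}\neq p_{1}^{\prime}}}\;\mathop{\sum\sum\sum}_{x_{1},x_{2},x_{3}\ll p_{1}p_{1}^{\prime}B/Q^{2}\cdot B^{\epsilon}}1.
\]

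Next, I would estimate this trivially. The number of pairs $(p_{1},p_{1}^{\prime})$ is $\ll P_{1}^{2}$, and for each such pair the number of admissible $\mathbf{x}$ is
\[
\ll \bigl(P_{1}^{2}B/Q^{2}\bigr)^{3}B^{\epsilon}=\bigl(B/P_{2}^{2}\bigr)^{3}B^{\epsilon}=B^{3}/P_{2}^{6}\cdot B^{\epsilon},
\]
using $Q=P_{1}P_{2}$. Multiplying everything together gives an off-diagonal contribution of
\[
\frac{P_{2}^{3}B^{\epsilon}}{P_{1}^{2}}\cdot P_{1}^{2}\cdot\frac{B^{3}}{P_{2}^{6}}\cdot B^{\epsilon}\ll \frac{B^{3}}{P_{2}^{3}}B^{\epsilon},
\]
which is exactly the second term in the statement. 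Combining with the diagonal term $P_{1}^{2}P_{2}^{3}$ from Lemma \ref{afterpoisson} yields the claim.

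There is no real obstacle here: the genuine work has already been done in Lemma \ref{midlelemma} (Mellin separation to isolate the small modulus $p_{2}$), in Lemma \ref{afterpoisson} (Poisson on $\mathbf{x}$ to shorten the dual range to $\ll p_{1}p_{1}^{\prime}B/Q^{2}=B/P_{2}^{2}$), and in the factorization plus Lemma \ref{finalcharsum} (which is what beats the trivial bound $p^{4}$ on $\mathscr{C}(p,\mathbf{x})$ that would otherwise ruin the estimate). The only step that requires any attention is making sure the bookkeeping of $Q$, $P_{1}$, $P_{2}$ is done correctly, in particular the conversion $P_{1}^{2}/Q^{2}=1/P_{2}^{2}$ that produces the clean $B^{3}/P_{2}^{3}$ final bound.
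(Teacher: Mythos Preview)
Your proposal is correct and follows essentially the same route as the paper: combine Lemma~\ref{midlelemma}, Lemma~\ref{afterpoisson}, and the factorization lemma for $\mathcal{C}$ to reduce to the off-diagonal sum, then apply Lemma~\ref{finalcharsum} to bound each $\mathscr{C}$-factor by $p^{3}$, and finish by counting the admissible $\mathbf{x}$ using $p_{1}p_{1}^{\prime}B/Q^{2}\asymp B/P_{2}^{2}$. Your bookkeeping is in fact tidier than the paper's, which records the intermediate bound $P_{1}^{6}B^{3}/P_{2}^{6}$ for the inner $\mathbf{x}$-sum before summing over $p_{1},p_{1}^{\prime}$; the arithmetic is identical.
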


\begin{proof}
From Lemma \ref{midlelemma}, Lemma \ref{afterpoisson} and Lemma \ref{multiprop}, we infer that
$$\mathcal{S}_{1}^{\sharp}(Q, B) \ll P_{1}^2 P_{2}^{3} + \frac{P_{2}^3 B^{\epsilon}}{P_{1}^2} \mathop{\sum \sum }_{\substack{p_{1}, p_{1}^{\prime} \sim P_{1} \\ p_{1} \neq p_{1}^{\prime}}} \frac{1}{(p_{1}p_{1}^{\prime})^3} \, \mathop{\sum \sum \sum}_{x_{1}, x_{2}, x_{3} \ll \frac{p_{1}p_{1}^{\prime} B}{Q^2} B^{\epsilon}} \, \Big \vert \mathscr{C}(p_{1},\bar{p_{1}^{\prime}} {\bf x}) \, \,  \mathscr{C}(p_{1}^{\prime}, \bar{p_{1}} {\bf x}) \Big \vert.$$
 By Lemma \ref{finalcharsum}, we have
\begin{align*}
	\mathscr{C}(p_{1},\bar{p_{1}^{\prime}} {\bf x}) \ll p_1^3.
\end{align*}
Similar bounds holds for $\mathscr{C}(p_{1}^{\prime}, \bar{p_{1}} {\bf x})$ as well. 
Using these bounds we see that 
$$\mathop{\sum \sum \sum}_{x_{1}, x_{2}, x_{3} \ll \frac{p_{1}p_{1}^{\prime} B}{Q^2} B^{\epsilon}} \, \Big \vert \mathscr{C}(p_{1},\bar{p_{1}^{\prime}} {\bf x}) \, \,  \mathscr{C}(p_{1}^{\prime}, \bar{p_{1}} {\bf x}) \Big \vert \ll  \frac{P_{1}^{6}B^{3}}{P_{2}^{6}}.$$

Finally executing  the sum over $x_{i}$, we get the lemma.
\end{proof}

%We end this section by noting down the conclusion of this section. By keeping all the above discussion in mind we finally arrive at the bound
%\begin{equation} \label{sharpbound}
%\mathcal{S}^{\sharp}(Q, B)  \ll P_{1}^2 P_{3}^{3} + \frac{B^{3}}{P_{3}^{3}} B^{\epsilon}.
%\end{equation}

\subsection{Estimation of $\mathcal{S}^{\flat}(Q, B)$ } \label{sectionflat}
In this section we estimate $\mathcal{S}^{\flat}(Q, B)$. For this we get better bounds than $\mathcal{S}^{\sharp}(Q, B)$. Indeed , we have the following lemma.

\begin{lemma} \label{flatbound}
We have
\begin{equation} 
\mathcal{S}^{\flat}(Q,B) \ll  \frac{P_{1}^3}{P_{2}}.
\end{equation}
\end{lemma}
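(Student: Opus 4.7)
The plan is to decompose $\mathcal{S}^{\flat}(Q,B)$ according to which of the two prime factors is shared between $q$ and $q^\prime$, factor the resulting character sum by multiplicativity, and then exploit a strong vanishing and size estimate for $\mathfrak{C}(p^2,{\bf x})$ that was not needed in the $\mathcal{S}^{\sharp}$ analysis. Since $10 P_2 \leq P_1$, the prime scales $P_1$ and $P_2$ are disjoint, so the conditions $(q,q^\prime)>1$ and $q\neq q^\prime$ force exactly one of $p_1=p_1^\prime$ or $p_2=p_2^\prime$ to hold. I split $\mathcal{S}^{\flat}=\mathcal{S}^{\flat}_A+\mathcal{S}^{\flat}_B$ accordingly and focus on the shared-$p_2$ case $B$, which one expects to dominate, since in case $A$ the larger shared prime $p_1$ enforces a stronger $p$-adic restriction on ${\bf x}$ and yields a smaller contribution.

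In case $B$ we have $qq^\prime=p_1 p_1^\prime p_2^2$ with $p_1,p_1^\prime,p_2$ pairwise distinct, so Lemma \ref{multiprop} gives $\mathfrak{C}(qq^\prime,{\bf x})=\mathfrak{C}(p_1,{\bf x})\,\mathfrak{C}(p_1^\prime,{\bf x})\,\mathfrak{C}(p_2^2,{\bf x})$ with the two prime factors bounded by $P_1^{3/2}$ via Lemma \ref{c2bound}. The novel ingredient is the analysis of $\mathfrak{C}(p^2,{\bf x})$: using $\chi_{p^2}(n)=\mathbf{1}_{(n,p)=1}$ for prime $p$, I write this as a complete sum modulo $p^2$ minus the sum restricted to $p\mid F(\beta_1,\beta_2,\beta_3)$, then substitute $\beta_i=\alpha_i+p\gamma_i$ and execute the $\gamma_i$-sums. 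Since $F(\alpha+p\gamma)\equiv F(\alpha)\pmod{p}$, this forces $\mathfrak{C}(p^2,{\bf x})\equiv 0$ unless $p\mid{\bf x}$, and writing ${\bf x}=p{\bf y}$ gives
$$\mathfrak{C}(p^2, p{\bf y}) = p^6\,\mathbf{1}_{p \mid {\bf y}} - p^3 \mathop{\sum\sum\sum}_{\substack{\alpha_1,\alpha_2,\alpha_3\bmod p \\ F(\alpha_1,\alpha_2,\alpha_3)\equiv 0}} e\!\left(\frac{\alpha_1 y_1+\alpha_2 y_2+\alpha_3 y_3}{p}\right).$$
I then estimate the final exponential sum by parameterising the affine cone $\{F=0\}\subset\mathbb{A}^3$ through the smooth projective curve $C:F=0$ in $\mathbb{P}^2$: the contribution of the origin is $O(1)$, and summing the $\mathbb{F}_p^{\times}$-scaling orbit over each $[\beta]\in C(\mathbb{F}_p)$ reduces the sum, for $p\nmid{\bf y}$, to $-|C(\mathbb{F}_p)|+p\cdot\#(C\cap L_{\bf y})$, where $L_{\bf y}$ is the projective line $\beta\cdot {\bf y}=0$. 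Weil's theorem gives $|C(\mathbb{F}_p)|=p+O(\sqrt{p})$ and Bezout bounds $\#(C\cap L_{\bf y})$ by $\deg C=4$, so the character sum is $O(p)$; hence $|\mathfrak{C}(p^2,{\bf x})| \ll p^6\mathbf{1}_{p^2\mid{\bf x}}+p^4\mathbf{1}_{p\mid{\bf x}}$.

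Finally, I bound $|\mathfrak{I}(qq^\prime,{\bf x})|\ll 1$ trivially and assemble these inputs in the post-Poisson formula \eqref{S after poisson}. Splitting the ${\bf x}$-sum according to whether $p_2^2\mid{\bf x}$ or $p_2$ exactly divides ${\bf x}$ and using the respective size bounds together with the counting estimates $\#\{{\bf x}:p_2^k\mid{\bf x},\,|x_i|\ll Q^2 B^{\epsilon}/B\}\ll Q^6 B^{\epsilon}/(B^3 p_2^{3k})$ for $k=1,2$, summing the geometric series over $p_1,p_1^\prime\sim P_1$ and $p_2\sim P_2$ delivers the claimed bound in case $B$; case $A$ is treated symmetrically and produces a smaller outcome because the extra $p_1$-restriction on ${\bf x}$ is much tighter. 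The main obstacle is the character-sum estimate for $\mathfrak{C}(p^2,{\bf x})$: obtaining the correct power of $p$ requires the Weil bound for the smooth plane curve $F=0$ combined with Bezout, since naive Deligne estimates for the affine cone $\{F=0\}\subset\mathbb{A}^3$ give a weaker exponent and lose the $P_2$-saving asserted in the lemma.
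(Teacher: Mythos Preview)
Your overall strategy coincides with the paper's: split according to which prime is shared, use the multiplicativity Lemma~\ref{multiprop} to factor $\mathfrak{C}(qq',{\bf x})$, bound $\mathfrak{C}(p_1,{\bf x})$ and $\mathfrak{C}(p_1',{\bf x})$ by Lemma~\ref{c2bound}, and control $\mathfrak{C}(p_2^2,{\bf x})$ separately. You are in fact more careful than the paper here: the paper asserts the identity $\mathfrak{C}(p_2^2,{\bf x})=\prod_i\sum_{\beta\bmod p_2^2}e(x_i\beta/p_2^2)$, which drops the factor $\chi_{p_2^2}(F(\beta))=\mathbf{1}_{p_2\nmid F(\beta)}$ and is not literally correct; your decomposition into the full sum minus the $F\equiv 0$ piece, together with the Weil/B\'ezout estimate, is the right computation and yields the honest bound $|\mathfrak{C}(p_2^2,{\bf x})|\ll p_2^6\mathbf{1}_{p_2^2\mid{\bf x}}+p_2^4\mathbf{1}_{p_2\mid{\bf x}}$.

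The gap is in your final assembly. The $p_2^6\mathbf{1}_{p_2^2\mid{\bf x}}$ piece indeed contributes $\ll P_1^3/P_2$, matching the paper's (oversimplified) calculation. But the secondary piece $p_2^4\mathbf{1}_{p_2\mid{\bf x}}$ does \emph{not}: inserting it into \eqref{S after poisson} and using $\#\{{\bf x}:|x_i|\ll Q^2B^\epsilon/B,\ p_2\mid{\bf x}\}\ll (Q^2/(Bp_2))^3$ gives
\[
\frac{B^3}{Q^2}\sum_{p_1,p_1'\sim P_1}\sum_{p_2\sim P_2}\frac{(p_1p_1')^{3/2}\,p_2^{4}}{(p_1p_1')^{3}p_2^{6}}\cdot\frac{Q^6}{B^3p_2^{3}}
\ \asymp\ \frac{Q^4}{P_1P_2^{4}}=P_1^{3},
\]
which is larger than the stated $P_1^3/P_2$ by a factor $P_2$. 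So your argument, carried out correctly, proves $\mathcal{S}^\flat(Q,B)\ll P_1^3$ rather than the lemma as written; the claim that the two pieces together ``deliver the claimed bound'' is where the proposal fails. (For the record, this weaker bound is still harmless for Theorem~\ref{mainth}, since with $P_1=B^{3/5}$ one has $P_1^3=B^{9/5}\ll B^{21/10}$; but as a proof of the lemma exactly as stated it does not go through.)
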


\begin{proof}
We have
$$\mathcal{S}^{\flat }(Q, B) = \mathcal{S}_{1}^{\flat}(Q,B) + \mathcal{S}_{2}^{\flat}(Q,B) $$
where
$$\mathcal{S}_{1}^{\flat}(Q,B)=\mathop{\sum \sum \sum}_{x_{i}} \mathop{\sum \sum}_{p_{1} \sim P_{1}, p_{2} \sim P_{2}}  \sum_{\substack{p_{1}^{\prime} \sim P_{1} \\ p_{1}^{\prime} \neq p_{1}}} \sum_{\substack{p_{2}^{\prime} \sim P_{2} \\ p_{2}^{\prime} = p_{2}}} (...)$$
and 
$$\mathcal{S}_{2}^{\flat}(Q,B) =\mathop{\sum \sum \sum}_{x_{i}} \mathop{\sum \sum}_{p_{1} \sim P_{1}, p_{2} \sim P_{2}}    \sum_{\substack{p_{1}^{\prime} \sim P_{1} \\ p_{1}^{\prime} = p_{1}}} \sum_{\substack{p_{2}^{\prime} \sim P_{2} \\ p_{2}^{\prime} \neq p_{2}}}(...).$$
	
We only have to deal with sum $\mathcal{S}_{1}^{\flat}(Q,B)$ as the treatment for the sum $\mathcal{S}_{2}^{\flat}(Q,B)$ is similar(in fact easier and getting better bounds). The expression for $\mathcal{S}_{1}^{\flat}(Q,B)$ is given by
$$\frac{B^{3}}{Q^2} \mathop{\sum  \sum}_{\substack{p_{1},p_{1}^{\prime} \sim P_{1} \\ p_{1} \neq p_{1}^{\prime}}} \sum_{p_{2} \sim P_{2}} \frac{1}{(p_{1}p_{1}^{\prime} p_{2}^2)^3} \mathop{\sum \sum \sum}_{\substack{x_{1},x_{2},x_{3} \in \mathbb{Z} \\ x_{i} \ll \frac{Q^2 B^{\epsilon}}{B}}} \, \mathfrak{C}(p_{1}p_{1}^{\prime}p_{2}^2, {\bf x}) \, \mathfrak{I}(p_{1}p_{1}^{\prime}p_{2}^2, {\bf x}).$$
Let us deal with $\mathfrak{C}(p_{1}p_{1}^{\prime}p_{2}^2, {\bf x})$. From \eqref{maincharsum}, we recall 
	
$$\mathfrak{C}(p_{1}p_{1}^{\prime}p_{2}^2, {\bf x}) =\mathop{\sum \sum \sum}_{\beta_1, \beta_2, \beta_3 \bmod p_{1}p_{1}^{\prime}p_{2}^2 }   \chi_{p_{1}p_{1}^{\prime}} \big(  F(\beta_1, \beta_2, \beta_3)\big) e \left(  \frac{x_1 \beta_1 +x_2 \beta_2 + x_3  \beta_3 }{p_{1}p_{1}^{\prime}p_{2}^{2}}\right).$$
By Lemma \ref{multiprop} we get
$$\mathfrak{C}(p_{1}p_{1}^{\prime}p_{2}^2, {\bf x}) = \mathfrak{C}(p_{1}, {\bf x}) \,  \mathfrak{C}(p_{1}^{\prime}, {\bf x})  \, \mathfrak{C}(p_{2}^2, {\bf x}).$$
We see that
	
\[
\mathfrak{C}(p_{2}^2, {\bf x}) = \prod_{i=1}^{3} \sum_{\beta  \, \text{mod}  \, p_{2}^2 } \, e\left(\frac{x_{i} \beta}{p_{2}^2}\right) =
\begin{cases}
p_{2}^6 &\text{if} \, p_{2}^{2} \mid x_{i} \, \text{for all} \, i=1,2,3\\
0 &\text{otherwise}. 
\end{cases}
\]
	
Therefore we get
$$\mathcal{S}_{1}^{\flat}(Q,B)= \frac{B^{3}}{Q^2} \mathop{\sum  \sum}_{\substack{p_{1},p_{1}^{\prime} \sim P_{1} \\ p_{1} \neq p_{1}^{\prime}}} \sum_{p_{2} \sim P_{2}} \frac{1}{(p_{1}p_{1}^{\prime} )^3} \mathop{\sum \sum \sum}_{\substack{x_{1},x_{2},x_{3} \in \mathbb{Z} \\ x_{i} \ll \frac{Q^2 B^{\epsilon}}{B} \\ p_{2}^{2} \mid x_{i} \forall i}} \, \mathfrak{C}(p_{1}, {\bf x} ) \mathfrak{C}(p_{1}^{\prime}, {\bf x})\, \mathfrak{I}(p_{1}p_{1}^{\prime}p_{2}^2, {\bf x}).$$

Now we use Lemma \ref{c2bound} to bound character sums $\mathfrak{C}(p_{1}^{\prime}, {\bf x}), \mathfrak{C}(p_{1}, {\bf x} )$, and by executing the remaining sums we infer that 
$$\mathcal{S}_{1}^{\flat}(Q,B) \ll  \frac{P_{1}^3}{P_{2}}.$$
Thus the lemma follows.
\end{proof}

\section{\bf Conclusion: Proof of Theorem \ref{mainth}} \label{choose}

We recall that 
\begin{align*}
\mathcal{S}(Q,B) &= \mathcal{S}^{\sharp}(Q,B)+\mathcal{S}^{\flat}(Q,B) \\
& =\mathcal{S}_{1}^{\sharp }(Q, B) - \mathcal{S}_{2}^{\sharp }(Q, B)- \mathcal{S}_{3}^{\sharp }(Q, B) + 2 \mathcal{S}_{4}^{\sharp }(Q, B) +\mathcal{S}^{\flat}(Q,B).
\end{align*}
From Lemma \ref{s34sharp}, Lemma \ref{midlelemma}, Lemma \ref{s1sharpbound} and Lemma \ref{flatbound} we conclude that
$$\mathcal{S}(Q,B) \ll P_{1}^2 P_{2}^{3} + \frac{B^{3}}{P_{2}^{3}} B^{\epsilon}   + \frac{P_{1}^3}{P_{2}}.$$

Therefore we have
\begin{equation} \label{finaleq}
N(B) \ll \frac{B^{3}}{P_{1} P_{2}} + P_{1}^2 P_{2}^{3} + \frac{B^{3}}{P_{2}^{3}}  + \frac{P_{1}^3}{P_{2}}.
\end{equation}

We choose $P_{1}$  and $ P_{2}$ such that 
$$\frac{B^{3}}{P_{1} P_{2}} > \frac{B^{3}}{P_{2}^{3}}$$
which is same as the condition  $ P_{1} < P_{2}^2$, thus we take $P_{1} = P_{2}^{2-\epsilon}$. For this choice of $P_{1}$ and $P_{2}$, the first two terms dominate the other terms in \eqref{finaleq}.

We get the optimal choice for $ P_{2}$ by equating
$$\frac{B^{3}}{P_{1} P_{2}} = P_{1}^2 P_{2}^{3} .$$
Therefore the optimal choice is $P_{2}= B^{3/10}$. Thus, we have
$$N(B) \ll B^{2+ \frac{1}{10}+\epsilon}.$$

Hence the proof of  Theorem \ref{mainth} follows.

\section*{\bf Acknowledgements}
Authors are grateful to Prof. Ritabrata Munshi for sharing his ideas with them and for his support and encouragement.  Authors are  thankful to Prof. Satadal Ganguly for his constant support and encouragements. Authors express their thanks to Prof. T. D. Browning, Prof. L. Pierce and Prof. Per Salberger  for showing their interest in authors work and for giving useful suggestions and comments. Finally,  first three authors would like to thank Stat-Math unit, Indian Statistical Institute Kolkata and the last author would like to thank IIT Kanpur for providing excellent research environment.

\end{document}